%
%
%
%
%
%
\RequirePackage{fix-cm}
\documentclass[smallextended]{svjour3}       
\smartqed  
\usepackage[unicode,colorlinks=true,linkcolor=blue,urlcolor=blue,citecolor=blue]{hyperref}%
\usepackage{graphicx}
%
%

\usepackage{amsmath,amssymb,enumitem,booktabs}
%
\spnewtheorem{myproposition}[theorem]{Proposition}{\bf}{\it}
\spnewtheorem{mycorollary}[theorem]{Corollary}{\bf}{\it}
\spnewtheorem{myclaim}{Claim}{\it}{\rm}

\newlist{numberedlist}{enumerate}{2}
\setlist[numberedlist]{label=(\arabic{section}.\arabic*),leftmargin=*}

%
\journalname{Mathematical Programming}

\renewcommand{\makeheadbox}{{%
\hbox to0pt{\vbox{\baselineskip=10dd\hrule\hbox
to\hsize{\vrule\kern3pt\vbox{\kern3pt
\hbox{\bfseries Mathematical Programming (2019)}
\hbox{\href{https://doi.org/10.1007/s10107-019-01427-7}{DOI
    10.1007/s10107-019-01427-7}}
\kern3pt}\hfil\kern3pt\vrule}\hrule}%
\hss}}}

\begin{document}

\title{New Polyhedral and Algorithmic Results on Greedoids \thanks{The
    research reported in this paper has been supported by the National
    Research, Development and Innovation Fund (TUDFO/51757/2019-ITM,
    Thematic Excellence Program). The research reported in this paper
    was supported by the BME - Artificial Intelligence FIKP grant of
    EMMI (BME FIKP-MI/SC). Research supported by No. OTKA 185101 of
    the Hungarian Scientific Research Fund.}}



\author{D\'avid Szeszl\'er}


\institute{D. Szeszl\'er \at
Department of Computer Science and Information Theory\\
Budapest University of Technology and Economics\\
Magyar Tud\'osok K\"or\'utja 2., Budapest, 1117, Hungary\\
\email{szeszler@cs.bme.hu}           
}

\date{Published online: 26 August 2019}

\maketitle

\begin{abstract}
We present various new results on greedoids.  We prove a theorem that
generalizes an equivalent formulation of Edmonds' classic matroid
polytope theorem to local forest greedoids -- a class of greedoids
that contains matroids as well as branching greedoids. We also
describe an application of this theorem in the field of measuring the
reliability of networks by game-theoretical tools. Finally, we prove
new results on the optimality of the greedy algorithm on greedoids and
correct some mistakes that have been present in the literature for
almost three decades.
\keywords{Greedoid \and Nash Equilibrium \and
  Matroid \and Security} \subclass{MSC 90C27 \and MSC 91A80}
\end{abstract}

\section{Introduction}\label{sect:intro}

Greedoids were introduced by Korte and Lov\'asz at the beginning of
the 1980s as a generalization of matroids. The motivation behind the
concept was the observation that in the proofs of various results on
matroids subclusiveness (that is, the property that all subsets of
independent sets are also independent) is not needed.  Besides
matroids, the class of greedoids includes some further very important
combinatorial objects such as the edge sets of subtrees of a graph
rooted at a given node. 

Although the research of greedoids was very active until the
mid-1990s, the topic seems to have faded away since then. Most of the
known results on greedoids are already included in the comprehensive
book of Korte, Lov\'asz and Schrader \cite{KLS} published in 1991.
The fact that greedoids have not gained as much importance within
combinatorial optimization as matroids is probably due to the fact
that the class of greedoids is much more diverse than that of matroids
and classic concepts and results on matroids do not seem to generalize
easily to greedoids.

The motivations behind the results of this paper are
threefold. Firstly, we identify a class of greedoids, \emph{local
  forest greedoids}, that includes both matroids and branching
greedoids and that admits a generalization of a fundamental polyhedral
result on matroids: Edmonds' classic theorem on the polytope spanned
by incidence vectors of independent sets of a matroid. In particular,
we prove a generalization of an eqivalent formulation of this theorem
to local forest greedoids.  To the best of our knowledge, no
generalization of (any form of) the matroid polytope theorem to
greedoids has been known. We do this partly in the (perhaps vain) hope
that further fundamental results on matroids will turn out to be
generalizable to this class of greedoids.

Secondly, we aim at generalizing some results obtained in
\cite{Sz}. There we considered some attacker-defender games played on
graphs with the aim of defining new security metrics of graphs and
better understanding others that had been known in the literature. For
this purpose, we defined a general framework involving matroids: the
\emph{Matroid Base Game} is a two-player, zero sum game in which the
Attacker aims at hitting a base chosen by the Defender. In particular,
the Attacker chooses an element of the ground set of a given matroid
and the Defender chooses a base of the same matroid; then the payoff
depends on both of their choices in such a way that it is favorable
for the Attacker if his chosen element belongs to the base chosen by
the Defender. The results of \cite{Sz} on the Matroid Base Game served
as a common generalization of some results that had been known in the
literature on measuring the security of networks via game-theoretical
means. In particular, the Nash-equilibrium payoff of the Matroid Base
Game was determined and it was proved that it is a common
generalization of some known graph reliability metrics. However, there
are other known metrics of a very similar nature which did not fit
into the framework provided by the Matroid Base Game. In this paper we
further generalize the definition of the Matroid Base Game by
replacing matroids with local forest greedoids and we prove that some
of the results of \cite{Sz} generalize to this case too. We also show
that this more general framework is capable of handling and
generalizing some further graph reliability metrics known from the
literature beyond the ones already contained in the framework provided
by the matroid base game.

The third motivation behind the results of this paper is to better
understand the conditions under which the greedy algorithm is optimal
on greedoids. This question is a central topic in the literature of
greedoids, the name greedoid itself comes from ``a synthetic blending
of the words greedy and matroid'' \cite{KLS} which indicates that one
of the basic motivations of the notion was to extend the theoretical
background behind greedy algorithms beyond the well-known results on
matroids. Accordingly, Korte and Lov\'asz proved some fundamental
results on the optimality of the greedy algorithm on greedoids in
\cite{KL1} and \cite{KL2} which were also presented and further
extended in \cite{KLS}. Most surprisingly however, they seem to have
overlooked a detail which led them to some false claims. These
mistakes, which seem to have remained hidden in the literature of
greedoids for almost three decades, will be pointed out and
corrections will be proposed and proved.

It should be emphasized though that, although the optimality of the
greedy algorithm in a certain special case (see
Theorem~\ref{thm:boyd}) will be a crucial tool for proving the above
mentioned polyhedral result, the results of the present paper on the
optimality of the greedy algorithm are not needed for this proof, it
is only Theorem~\ref{thm:boyd} proved in \cite{Boyd} that is relied
on.

This paper is structured as follows. In Sections \ref{sect:greedoid}
and \ref{sect:prelimgreedy} all the necessary background on greedoids
and the greedy algorithm on greedoids, respectively, is given.  In
Section~\ref{sect:mainresult} the above mentioned polyhedral result is
clamied and proved and in Section~\ref{sect:apply} we briefly outline
an application of this result concerning the measurement of the
reliability of networks. Finally, Section~\ref{sect:greedyalg} is
dedicated to some results on the optimality of the greedy algorithm on
greedoids.

\section{Preliminaries on Greedoids}\label{sect:greedoid}

All the definitions and claims in this section are taken from
\cite{KLS}.

A \emph{greedoid} $G=(S,\mathcal{F})$ is a pair consisting of a finite
ground set $S$ and a collection of its subsets $\mathcal{F}\subseteq
2^S$ such that the following properties are fulfilled:
\begin{numberedlist}
\item $\emptyset\in\mathcal{F}$
\item If $X,Y\in\mathcal{F}$ and $|X|<|Y|$ then there exists a 
$y\in Y-X$ such that $X+y\in\mathcal{F}$.\label{prop:exchange}
\end{numberedlist}

When we apply \ref{prop:exchange} on the sets $X,Y\in\mathcal{F}$ with
$|X|<|Y|$, we say that we \emph{augment} $X$ from $Y$.
Members of $\mathcal{F}$ are called \emph{feasible sets}. Obviously,
the definition of greedoids is obtained from that of matroids by
relaxing subclusiveness, that is, subsets of feasible sets are not
required to be feasible any more. On the other hand,
\ref{prop:exchange} immediately implies that every $X\in\mathcal{F}$
has a \emph{feasible ordering}: $(x_1,x_2,\ldots,x_k)$ is a feasible
ordering of $X$ if $X=\{x_1,x_2,\ldots,x_k\}$ and
$\{x_1,x_2,\ldots,x_i\}\in\mathcal{F}$ holds for every $1\le i\le
k$. The existence of a feasible ordering, in turn, implies the
\emph{accessible property} of greedoids: for every $\emptyset\ne
X\in\mathcal{F}$ there exists an $x\in X$ such that
$X-x\in\mathcal{F}$.

In this paper, the following notations will be (and have been) used:
for a subset $X\subseteq S$ and an element $x\in S$ we will write
$X+x$ and $X-x$ instead of $X\cup\{x\}$ and $X-\{x\}$,
respectively. Furthermore, given any function
$c:S\rightarrow\mathbb{R}$ and a subset $X\subseteq S$, $c(X)$ will
stand for $\sum\{c(x):x\in X\}$.

Some of the well-known terminology on matroids can be applied to
greedoids without any modification. In particular, the \emph{rank}
$r(A)$ of a set $A\subseteq S$ is $r(A)=\max\{|X|:X\subseteq A,
X\in\mathcal{F}\}$. Given a subset $A\subseteq S$, a \emph{base} of
$A$ is a subset $X\subseteq A$, $X\in\mathcal{F}$ of maximum
size. This, by property~\ref{prop:exchange} is equivalent to saying
that $X+y\notin\mathcal{F}$ for every $y\in A-X$. A base of $S$ is
called a base of the greedoid $G=(S,\mathcal{F})$. The set of bases of
$G$ will be denoted by $\mathcal{B}$.

\emph{Minors of greedoids} can also be defined almost identically to
those of matroids. If $G=(S,\mathcal{F})$ is a greedoid and
$X\subseteq S$ is an arbitrary subset then the \emph{deletion} of $X$
yields the greedoid $G\setminus X=(S-X,\mathcal{F}\setminus X)$, where
$\mathcal{F}\setminus X=\{Y\subseteq S-X:Y\in\mathcal{F}\}$. If
$X\in\mathcal{F}$ is a feasible set then the \emph{contraction} of $X$
yields the greedoid $G/X=(S-X,\mathcal{F}/X)$, where $\mathcal{F}/X=\{
Y\subseteq S-X: Y\cup X\in\mathcal{F}\}$. Then a minor of $G$ is
obtained by applying these two operations on $G$. It is
straightforward to check that minors are indeed greedoids. (Note,
however, that $G/X$ was only defined here in the $X\in\mathcal{F}$
case. The definition could be extended to a wider class of subsets,
but unless some further structural properties are imposed on the
greedoid, not to arbitrary ones. See \cite[Chapter V.]{KLS} for the
details.)

In this paper the following terminology will also be used:
$X\subseteq S$ will be called \emph{subfeasible} if there exists
a $Y\in\mathcal{F}$ such that $X\subseteq Y$. The set of subfeasible
sets will be denoted by $\mathcal{F}^{\vee}$.

There are many known examples of greedoids beyond matroids and they
arise in diverse areas of mathematics, see \cite{KLS} for an extensive
list. For the purposes of this paper, \emph{branching greedoids} will
be of importance. Let $H=(V,E_u,E_d)$ be a mixed graph (that is, it
can contain both directed and undirected edges) with $V$, $E_u$ and
$E_d$ being its set of nodes, undirected edges and directed edges,
respectively. Furthermore, let $r\in V$ be a given root node. The
ground set of the branching greedoid on $H$ is $E_u\cup E_d$ and
$\mathcal{F}$ consists of all subsets $A\subseteq E_u\cup E_d$ such
that disregarding the directions of the arcs in $A\cap E_d$, $A$ is
the edge set of a tree containing $r$ and for every path $P$
in $A$ starting in $r$ all edges of $P\cap E_d$ are directed away
from $r$. It is straightforward to check that $G=(E_u\cup
E_d,\mathcal{F})$ is indeed a greedoid. $G$ is called an
\emph{undirected branching greedoid} or a \emph{directed branching
  greedoid} if $H$ is an undirected graph (that is, $E_d=\emptyset$)
or a directed graph (that is, $E_u=\emptyset$), respectively.

Most of the known results on greedoids are about special classes of
greedoids, that is, further structural properties are assumed. Among
these, the following will be of relevance in this paper:
\begin{numberedlist}[start=3]
\item \emph{Local Union Property:}\\
if $A,B\in\mathcal{F}$ and $A\cup B\in\mathcal{F}^{\vee}$ then $A\cup
B\in\mathcal{F}$\label{lup}
\item \emph{Local Intersection Property:}\\
if $A,B\in\mathcal{F}$ and $A\cup B\in\mathcal{F}^{\vee}$ then $A\cap
B\in\mathcal{F}$\label{lip}
\item \emph{Local Forest Property:}\\
if $A, A+x, A+y, A\cup\{x,y\}, A\cup\{x,y,z\}\in\mathcal{F}$ then
either $A\cup\{x,z\}\in\mathcal{F}$ or $A\cup\{y,z\}\in\mathcal{F}$\label{lfp}
\end{numberedlist}

A greedoid $G=(S,\mathcal{F})$ is called an \emph{interval greedoid}
if it fulfills property~\ref{lup}; $G$ is a \emph{local poset
greedoid} if it fulfills \ref{lup} and \ref{lip}; finally,
$G$ is a \emph{local forest greedoid} if it fulfills \ref{lup},
\ref{lip} and \ref{lfp}. 

Obviously, all matroids are local forest greedoids and it is easy to
check that so are branching greedoids. However, there are further
examples that do not belong to either of these classes: for example,
the direct sum of the uniform matroid $U_{3,2}$ and a branching
greedoid that is not a matroid is also a local forest greedoid but it
is neither a matroid nor a branching greedoid (since all these
classes are closed under taking minors and $U_{3,2}$ is clearly not a
branching greedoid). Another type of example can be obtained from any
local forest greedoid (even a matroid): let $G=(S,\mathcal{F})$ be a
local forest greedoid, $X\in\mathcal{F}$ a feasible set and
$(x_1,x_2,\ldots,x_k)$ an arbitrary (not necessarily feasible)
ordering of $X$; then 
\[\mathcal{F}^{(x_1,\ldots,x_k)}=\{Y\in\mathcal{F}: X\subseteq Y\}
\cup\left\{\{x_1,x_2,\ldots,x_i\}:i=1,\ldots,k\right\}\cup
\{\emptyset\}\]
is also a local forest greedoid on $S$. 

Observe that in interval greedoids \ref{lup} implies that every
$X\in\mathcal{F}^{\vee}$ has a unique base; indeed, it is the union of
all feasible sets in $X$. In this paper, this unique base will be
denoted by $\Delta(X)$.  Analogously, \ref{lip} implies that if
$X\subseteq A$ and $A\in\mathcal{F}$ then there is a unique minimum
size feasible set containing $X$ in $A$. This gives rise to the
definition of paths: if $G=(S,\mathcal{F})$ is a local poset greedoid,
$A\in\mathcal{F}$ and $x\in A$ then the \emph{$x$-path} in $A$,
denoted by $P_x^A$ (or simply $P_x$ if this is unambiguous) is the
unique feasible set in $A$ containing $x$ such that no proper feasible
subset of $P_x^A$ contains $x$. Clearly, in case of branching
greedoids this notion translates to paths starting in the root node
$r$ that are directed in the sense that all directed edges in the path
are directed away from $r$. The following theorem was proved in
\cite{Schmidt}; we also give a simple proof here for the sake of
self-containedness.

\begin{theorem}[W. Schmidt, 1988 \cite{Schmidt},{\cite[Theorem
    VII.4.4]{KLS}}]\label{thm:lfgpath}
Let $G=(S,\mathcal{F})$ be a local poset greedoid. Then the following
are equivalent:
\begin{enumerate}[label=(\roman*)]
\item $G$ is a local forest greedoid (that is, it fulfills \ref{lfp});
\item every path in $G$ has a unique feasible ordering;
\item if $(a_1,a_2,\ldots,a_k)$ is the feasible ordering of a path
  then $\{a_1,a_2,\ldots,a_i\}$ is also a path for every $1\le i\le k$.
\end{enumerate}
\end{theorem}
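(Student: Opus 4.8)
The plan is to establish the cycle (i)~$\Rightarrow$~(iii)~$\Rightarrow$~(ii)~$\Rightarrow$~(i) after first setting up a small amount of ``path calculus''. Since $G$ satisfies \ref{lip}, for every $A\in\mathcal{F}$ and $x\in A$ the path $P_x^A$ is the intersection of all feasible subsets of $A$ that contain $x$, so any feasible subset of $A$ containing $x$ already contains $P_x^A$. From this I would record: (a)~each path $P$ has a unique \emph{head}, namely the unique $x\in P$ with $P=P_x^P$, equivalently the unique element of $P$ lying in no proper feasible subset of $P$; (b)~consequently, every feasible ordering $(a_1,\dots,a_k)$ of a path $P$ with head $x$ has $a_k=x$ and $\{a_1,\dots,a_{k-1}\}=P-x$, since the proper feasible subset $\{a_1,\dots,a_{k-1}\}$ of $P$ cannot contain $x$; and (c)~for $C\in\mathcal{F}$, writing $L(C)=\{e\in C:C-e\in\mathcal{F}\}$, accessibility gives $L(C)\ne\emptyset$, and if $L(C)=\{e\}$ is a singleton then $C$ is a path with head $e$ --- otherwise $P_e^C\subsetneq C$, and augmenting $P_e^C$ up to $C$ by \ref{prop:exchange} would give a chain whose penultimate member, being a feasible subset of $C$ of size $|C|-1$, equals $C-f$ for some $f\in L(C)=\{e\}$, i.e.\ equals $C-e$, which is impossible as this member also contains $P_e^C\ni e$.

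The crucial step, and the only place \ref{lfp} is needed, is the claim: \emph{if $G$ also satisfies \ref{lfp} and $P$ is a path of size at least $2$ with head $x$, then $P-x$ is a path}. To prove it I would suppose $C:=P-x\in\mathcal{F}$ is not a path; then $|L(C)|\ge 2$ by~(c), so I may choose distinct $p,q\in L(C)$, giving $C-p,C-q\in\mathcal{F}$, whence \ref{lip} applied to $C-p$ and $C-q$ (whose union is $C\in\mathcal{F}$) yields $A:=C-\{p,q\}\in\mathcal{F}$. Now $A$, $A+p=C-q$, $A+q=C-p$, $A\cup\{p,q\}=C$ and $A\cup\{p,q,x\}=P$ all lie in $\mathcal{F}$, so \ref{lfp} forces $A\cup\{p,x\}\in\mathcal{F}$ or $A\cup\{q,x\}\in\mathcal{F}$; but $A\cup\{p,x\}=P-q$ and $A\cup\{q,x\}=P-p$ are proper subsets of $P$ that contain $x$, hence are infeasible by~(a) --- a contradiction. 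Granting this, (i)~$\Rightarrow$~(iii) follows by induction on $|P|$: given a feasible ordering $(a_1,\dots,a_k)$ of a path $P$ with head $x$, the set $\{a_1,\dots,a_{k-1}\}=P-x$ is a path with feasible ordering $(a_1,\dots,a_{k-1})$, so all its prefixes are paths by induction, and $\{a_1,\dots,a_k\}=P$ is a path too. The implication (iii)~$\Rightarrow$~(ii) is a parallel induction not using \ref{lfp}: two feasible orderings of a path $P$ both end at its head by~(b), their prefixes of length $|P|-1$ are feasible orderings of the path $P-x$ (a path by~(iii)), so they coincide by induction, hence so do the original orderings.

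For (ii)~$\Rightarrow$~(i), assume~(ii) and take $A,A+p,A+q,A\cup\{p,q\},A\cup\{p,q,s\}\in\mathcal{F}$; one may clearly assume $p,q,s$ distinct and outside $A$. Consider the path $R:=P_s^{A\cup\{p,q,s\}}$. If $p\notin R$ then $R\subseteq A\cup\{q,s\}$, and \ref{lup} applied to $R$ and $A+q$ (their union being a subset of $A\cup\{p,q,s\}\in\mathcal{F}$, hence subfeasible) gives $A\cup\{q,s\}\in\mathcal{F}$; symmetrically, $q\notin R$ gives $A\cup\{p,s\}\in\mathcal{F}$; in either case we are done. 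So suppose $p,q\in R$. Then $R-s\in\mathcal{F}$ by~(b), and with $B:=(R-s)\cap A$, repeated use of \ref{lip} (to $R-s$ and $A+p$, to $R-s$ and $A+q$, and then to the resulting $B+p$ and $B+q$) gives $B,B+p,B+q\in\mathcal{F}$ together with $B\cup\{p,q\}=R-s$ and $B\cup\{p,q,s\}=R$. But then, for any feasible ordering $(d_1,\dots,d_m)$ of $B$, both $(d_1,\dots,d_m,p,q,s)$ and $(d_1,\dots,d_m,q,p,s)$ are feasible orderings of the path $R$, and they differ since $p\ne q$ --- contradicting~(ii). So the case $p,q\in R$ cannot occur, and \ref{lfp} holds.

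I expect the main obstacle to be the crucial claim of the second paragraph --- that trimming the head off a path leaves a path. Its two less obvious ingredients are fact~(c), which supplies \emph{two} deletable elements of $P-x$ precisely when $P-x$ is not a path, and the observation that deleting exactly those two elements creates a configuration on which \ref{lfp} can be brought into conflict with the minimality built into the definition of the path $P$. The bookkeeping in (ii)~$\Rightarrow$~(i) --- deciding whether $p$ and $q$ lie on $R$, and then manufacturing two feasible orderings of $R$ --- is the other substantive point, although more routine.
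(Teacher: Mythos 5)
Your proof is correct, but it traverses the equivalence in the opposite direction from the paper: you prove (i)~$\Rightarrow$~(iii)~$\Rightarrow$~(ii)~$\Rightarrow$~(i), whereas the paper proves (i)~$\Rightarrow$~(ii)~$\Rightarrow$~(iii)~$\Rightarrow$~(i). The genuinely different ingredient is your treatment of the step that consumes \ref{lfp}. The paper gets (i)~$\Rightarrow$~(ii) by taking a minimum-cardinality path with two feasible orderings, reducing to the case where their penultimate elements differ, and letting \ref{lfp} produce a smaller feasible set containing the head. You instead isolate a clean structural lemma --- a nonempty feasible $C$ is a path iff it has exactly one deletable element --- and use it to show that \ref{lfp} forces $P-x$ to be a path whenever $x$ is the head of $P$; the implication (i)~$\Rightarrow$~(iii) then falls out by induction, and (iii)~$\Rightarrow$~(ii) is a routine parallel induction. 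This buys a more quotable intermediate fact (the ``$|L(C)|=1$'' characterization and the head-trimming property of local forest greedoids) at the cost of slightly more bookkeeping up front. Your closing implication (ii)~$\Rightarrow$~(i) has the same architecture as the paper's (iii)~$\Rightarrow$~(i): both examine the path of $s$ (resp.\ $z$) in $A\cup\{p,q,s\}$, show it cannot contain both of the other two new elements, and finish with \ref{lup}; the only difference is that you exclude the bad case by exhibiting two distinct feasible orderings of that path, which is exactly what hypothesis (ii) forbids, while the paper appeals to prefixes being paths. All the individual verifications (uniqueness of the head via accessibility, the applications of \ref{lip}/\ref{lup} to sets whose union sits inside a feasible set, and the degenerate cases $|\{p,q,s\}|<3$ or $\{p,q,s\}\cap A\ne\emptyset$) check out.
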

\begin{proof}
Assume by way of contradiction that (i) is fulfilled but (ii) is not
and choose a path $P_z$ of minimum cardinality that has two different
feasible orderings: $(a_1,\ldots,a_k,x,z)$ and
$(b_1,\ldots,b_k,y,z)$. We first show that $x\ne y$ can be assumed
without loss of generality. Indeed, if $x=y$ then
$\{a_1,\ldots,a_k,x\}$ is not a path by the minimality of $|P_z|$, but
since it is feasible, it contains an $x$-path $P_x$ as a proper
subset. Then augmenting a feasible ordering of $P_x$ from 
$P_z$ by \ref{prop:exchange} we get a feasible ordering of $P_z$ the
second to last element of which is not $y$. So assume $x\ne y$ 
and let $A=P_z-\{x,y,z\}$. Then clearly $A+x\in\mathcal{F}$ and
$A+y\in\mathcal{F}$ hold by the feasibility of the two orderings and 
$A\cup\{x,y,z\}=P_z\in\mathcal{F}$. Hence by \ref{lup} and
\ref{lip} we have $A\cup\{x,y\}\in\mathcal{F}$ and $A\in\mathcal{F}$
too. Therefore \ref{lfp} implies $A\cup\{x,z\}\in\mathcal{F}$ or 
$A\cup\{y,z\}\in\mathcal{F}$. In both cases we get a smaller feasible
set containing $z$ than $P_z$ contradicting the definition of $P_z$.

Proving (iii) from (ii) is almost immediate: if
$\{a_1,a_2,\ldots,a_i\}$ were not a path then it would contain an
$a_i$-path by definition which could be augmented by
\ref{prop:exchange} from $P=\{a_1,\ldots,a_k\}$ to obtain a different
feasible ordering of $P$.

Finally, we show (i) from (iii).  Let $A$ and $x,y,z$ be given
according to $\ref{lfp}$ and let $B=A\cup\{x,y,z\}$.  Clearly, if
$A\cap\{x,y,z\}\ne\emptyset$ or $|\{x,y,z\}|<3$ then \ref{lfp} is
automatically fulfilled, so we can assume that neither of these is the
case.  Since $A+x\in\mathcal{F}$, we have $P_x^B\subseteq A+x$ and
hence $y\notin P_x^B$. Similarly, $x\notin P_y^B$. This, by (iii),
implies that $P_z^B$ contains at most one of $x$ and $y$; indeed, if
it contained both and, for example, $x$ preceded $y$ in a feasible
ordering of $P_z^B$ then since the prefix of this ordering up to $y$
would be a path by (iii), we would get $x\in P_y^B$. So assume
$y\notin P_z^B$ without loss of generality. Then applying \ref{lup} on
$A+x$ and $P_z^B$, both of which are subsets of $B\in\mathcal{F}$, we
get $A\cup\{x,z\}\in\mathcal{F}$ as claimed.\qed
\end{proof}

\section{Preliminaries on the Greedy Algorithm in
  Greedoids}\label{sect:prelimgreedy}

As mentioned in the Introduction, the notion of greedoids was
motivated by the fact that they provide the underlying structure for 
a simple greedy algorithm.

Let $G=(S,\mathcal{F})$ be an arbitrary greedoid and
$w:\mathcal{F}\rightarrow\mathbb{R}$ an objective function.  Assume
that we are interested in finding a base $B\in\mathcal{B}$ that
maximizes $w(B)$ across all bases of $G$.  For every $A\in\mathcal{F}$
the \emph{set of continuations} of $A$ is defined as $\Gamma(A)=\{x\in
S-A: A+x\in\mathcal{F}\}$. Then the \emph{greedy algorithm} for the
above problem can be described as follows \cite{KL1,KLS}:

\begin{enumerate}[labelindent=\parindent,itemsep=0pt,label=\textit{Step \arabic*.},leftmargin=*]
\item Set $A=\emptyset$.

\item If $\Gamma(A)=\emptyset$ then stop and
output $A$. 

\item Choose an $x\in\Gamma(A)$ such that
$w(A+x)\ge w(A+y)$ for every $y\in\Gamma(A)$.

\item Replace $A$ by $A+x$ and continue at
\textit{Step 2}.
\end{enumerate}

Obviously, if one is interested in minimizing $w(B)$ across all bases
then, since this is equivalent to maximizing $-w(B)$, the only
modification needed in the algorithm is to require $w(A+x)\le w(A+y)$
for every $y\in\Gamma(A)$ in Step 3.

Many of the well-known, elementary algorithms in graph theory fall
under this framework as shown by the following examples.

\begin{example}\label{ex:matroid}
If $M$ is a matroid and $w$ is linear (meaning that $w(A)=c(A)$ for
some weight function $c:S\rightarrow\mathbb{R}$) then the above greedy
algorithm is nothing but the well-known greedy algorithm on
matroids. In particular, we get Kruskal's algorithm for finding a
maximum weight spanning tree in case of the cycle matroid.
\end{example}

\begin{example}\label{ex:prim}
Let $G$ be the branching greedoid of the undirected graph $H$ and $w$
a linear objective function. Then the greedy algorithm translates to
Prim's well-known algorithm for finding a maximum weight spanning
tree. (Note that this algorithm cannot be interpreted in a
matroid-theoretical context.)
\end{example}

\begin{example}\label{ex:dijkstra}
Let $G$ be the branching greedoid of the mixed graph $H=(V,E_u,E_d)$
with root node $r$ and let $c:E_u\cup E_d\rightarrow\mathbb{R}^+$ be a
non-negative valued weight function. Then let
$w(A)=\sum\{c(P_e^A):e\in A\}$ for every $A\in\mathcal{F}$. Korte and
Lov\'asz observed \cite{KL1} that in this case the greedy algorithm
for minimizing $w(B)$ translates to Dijkstra's well-known shortest
path algorithm. Indeed, Dijkstra's algorithm constructs a spanning
tree on the set of nodes reachable from $r$ such that the unique path
from $r$ to every other node in this tree is a shortest path and hence
it clearly minimizes $w$.
\end{example}

Although the greedy algorithm finds an optimum base in the above
examples, it is obviously not to be expected that this is true in
general. The first sufficient condition for the optimality of the
greedy algorithm was given by Korte and Lov\'asz in \cite{KL1}. There
they introduced an even broader framework: they considered objective
functions defined on all feasible orderings of feasible sets. Given a
greedoid $G=(S,\mathcal{F})$, let $\mathcal{L}(\mathcal{F})$ denote the
set of all feasible orderings of all feasible sets. Extending the greedy
algorithm to the case of an objective function
$w:\mathcal{L}(\mathcal{F})\rightarrow\mathbb{R}$ is obvious: instead
of augmenting a feasible set $A\in\mathcal{F}$, it keeps maintaining
and updating a feasible ordering of $A$ that is always augmented by the
best possible choice $x$.

\begin{theorem}[B. Korte and L. Lov\'asz, 1984
\cite{KL1},{\cite[Theorem XI.1.3]{KLS}}]\label{thm:kl}
Let $G=(S,\mathcal{F})$ be an arbitrary greedoid and 
$w:\mathcal{L}(\mathcal{F})\rightarrow\mathbb{R}$ an objective
function. Assume that whenever $(a_1,\ldots,a_i,x)$ is a feasible
ordering of a set
$A+x\in\mathcal{F}$ (where $i=0$ is possible) such that 
$w\left((a_1,\ldots,a_i,x)\right)\ge w\left((a_1,\ldots,a_i,y)\right)$
for every $y\in\Gamma(A)$ then the following conditions hold:
\begin{numberedlist}
\item\label{prop:kltetel1}
  $w\left((a_1,\ldots,a_i,b_1,\ldots,b_j,x,c_1,\ldots,c_k)\right)\ge$

\hfill%
$w\left((a_1,\ldots,a_i,b_1,\ldots,b_j,z,c_1,\ldots,c_k)\right)$

if both of these strings are in $\mathcal{L}(\mathcal{F})$ (and $j=0$
or $k=0$ is possible).
\item\label{prop:kltetel2}
  $w\left((a_1,\ldots,a_i,x,b_1,\ldots,b_j,z,c_1,\ldots,c_k)\right)\ge$ 

\hfill%
$w\left((a_1,\ldots,a_i,z,b_1,\ldots,b_j,x,c_1,\ldots,c_k)\right)$

if both of these strings are in $\mathcal{L}(\mathcal{F})$ (and $j=0$
or $k=0$ is possible).
\end{numberedlist}
Then the greedy algorithm finds a maximum base with respect to $w$.
\end{theorem}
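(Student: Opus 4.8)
The plan is an exchange argument, with conditions \ref{prop:kltetel1} and \ref{prop:kltetel2} supplying the inequalities that in the matroid case would come from linearity of the objective. Write $g=(g_1,\dots,g_n)$ for the feasible ordering produced by the greedy algorithm; recall that by \ref{prop:exchange} all bases have the same size $n$, so that ``finding a maximum base'' amounts to $w(g)\ge w(\tau)$ for every feasible ordering $\tau=(t_1,\dots,t_n)$ of a base. Suppose this fails and, among all $w$-maximal feasible orderings of bases, pick $\tau=(t_1,\dots,t_n)$ whose common prefix with $g$ is as long as possible; say $t_i=g_i$ for $i\le\ell$ and $t_{\ell+1}\neq g_{\ell+1}=:x$, necessarily with $\ell<n$. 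Put $A=\{g_1,\dots,g_\ell\}$ and $B=\{t_1,\dots,t_n\}$. Since $x$ was the greedy choice after $(g_1,\dots,g_\ell)$ we have $w((g_1,\dots,g_\ell,x))\ge w((g_1,\dots,g_\ell,y))$ for all $y\in\Gamma(A)$, so conditions \ref{prop:kltetel1} and \ref{prop:kltetel2} are at our disposal with this prefix and this $x$. It suffices to exhibit a feasible ordering $\tau'$ of a base with $w(\tau')\ge w(\tau)$ whose common prefix with $g$ has length at least $\ell+1$: maximality then forces $w(\tau')=w(\tau)$, contradicting the choice of $\tau$. (Equivalently the whole argument can be run as an induction on $n$, treating the first step as below, contracting $g_1$ afterwards, and noting that conditions \ref{prop:kltetel1} and \ref{prop:kltetel2} are inherited by $G/g_1$ together with the induced objective $w'((y_1,\dots,y_k)):=w((g_1,y_1,\dots,y_k))$.)

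To build $\tau'$ I would split on whether $x\in B$. If $x=t_m\in B$ — where $m\ge\ell+2$, since $x\notin A$ and $x\neq t_{\ell+1}$ — I would use the exchange axiom \ref{prop:exchange} to reorganise $\tau$: starting from the feasible ordering $(g_1,\dots,g_\ell,x)$ of $A+x$ and repeatedly augmenting from the feasible prefix-set $\{t_1,\dots,t_m\}$ of $\tau$, one gets a feasible ordering of $\{t_1,\dots,t_m\}$ that begins $g_1,\dots,g_\ell,x$; appending $t_{m+1},\dots,t_n$ yields a feasible ordering $\tau'$ of $B$ agreeing with $g$ through position $\ell+1$, and the inequality $w(\tau')\ge w(\tau)$ has to be squeezed out of condition \ref{prop:kltetel2} (``pulling the greedy element $x$ forward past a block cannot decrease $w$'') along a chain of feasible orderings from $\tau$ to $\tau'$. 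If instead $x\notin B$, I would find the largest $p$ for which $\{t_1,\dots,t_{p-1},x\}\in\mathcal F$ (so $p\ge\ell+1$ and $\{t_1,\dots,t_p,x\}\notin\mathcal F$), extend $\{t_1,\dots,t_{p-1},x\}$ to a base by augmenting from $B$, and order it so its first $p$ elements are $t_1,\dots,t_{p-1},x$; condition \ref{prop:kltetel1} (``substituting the greedy element $x$ into a fixed slot cannot decrease $w$'') then gives $w((t_1,\dots,t_{p-1},x,\dots))\ge w(\tau)$, after which the same kind of interpolation, now via condition \ref{prop:kltetel2}, pushes $x$ back to position $\ell+1$. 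Either way we obtain the desired $\tau'$.

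The step I expect to be the real obstacle is feasibility bookkeeping. Conditions \ref{prop:kltetel1} and \ref{prop:kltetel2} compare only pairs of strings that both lie in $\mathcal L(\mathcal F)$, and in a general greedoid one may not transpose two adjacent entries of a feasible ordering and stay feasible; hence every intermediate string occurring in the chains above must itself be exhibited as a genuine feasible ordering. Arranging this seems to require choosing each augmentation step carefully — at every step taking an element that both preserves feasibility and moves the ordering toward the target — and will probably call for a secondary induction (on $m-\ell$, say, or on the number of positions in which the current ordering and the target still disagree). It is precisely this delicate interplay between the combinatorial exchange property and the applicability of conditions \ref{prop:kltetel1} and \ref{prop:kltetel2} that I would expect to consume most of the work, and overlooking it is exactly the kind of slip that can hide in a proof of this sort.
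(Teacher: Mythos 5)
The paper does not prove Theorem~\ref{thm:kl} --- it is quoted from Korte and Lov\'asz --- so I am comparing your proposal against the standard argument. Your skeleton (longest common prefix with the greedy word, case split on whether $x\in B$, inserting $x$ at the \emph{largest} feasible position via the exchange axiom, then condition~\ref{prop:kltetel1} for the substitution step) is the right one, and you correctly sense that the danger lies in feasibility of the intermediate words. But the plan as written has a genuine gap at exactly the point you flag: in both cases you ultimately want to move $x$ from some position $m>\ell+1$ to position $\ell+1$ via condition~\ref{prop:kltetel2}, and you propose to do this by ``a chain of feasible orderings'' anchored at the greedy prefix. That cannot work: \ref{prop:kltetel2} with prefix $(t_1,\dots,t_\ell)$ compares $\tau$ only with the word obtained by transposing positions $\ell+1$ and $m$ outright, and the intermediate prefix sets $\{t_1,\dots,t_j\}-t_{\ell+1}+x$ ($\ell+1\le j\le m-1$) of the transposed word need not be feasible in a general greedoid; no secondary induction on ``positions of disagreement'' repairs this, because \ref{prop:kltetel2} never lets you transpose two positions both of which lie strictly after the prefix for which $x$ is known to be a best continuation.

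The missing idea is that condition~\ref{prop:kltetel1}, used with an \emph{empty} $c$-block, propagates the best-continuation property forward: if $x$ is a best continuation of $(t_1,\dots,t_\ell)$ and $\{t_1,\dots,t_s\}+x\in\mathcal{F}$ for some $s>\ell$, then $w\bigl((t_1,\dots,t_s,x)\bigr)\ge w\bigl((t_1,\dots,t_s,v)\bigr)$ for every $v\in\Gamma(\{t_1,\dots,t_s\})$, so the hypothesis of \ref{prop:kltetel1}--\ref{prop:kltetel2} becomes available with the \emph{longer} prefix $(t_1,\dots,t_s)$ as well. One then takes $t$ to be the largest index in $\{\ell,\dots,m-2\}$ with $\{t_1,\dots,t_t\}+x\in\mathcal{F}$; maximality of $t$ together with repeated application of \ref{prop:exchange} shows that the single transposition of positions $t+1$ and $m$ yields a word all of whose prefixes are feasible (the only possible augmenting element at each step, $t_{t+1}$, is excluded because it would contradict the maximality of $t$), and \ref{prop:kltetel2} with prefix $(t_1,\dots,t_t)$ --- now legitimately invocable --- gives an optimal word with $x$ at position $t+1<m$. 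Iterating strictly decreases the position of $x$ until it reaches $\ell+1$, contradicting the maximality of $\ell$. Without this propagation step your chain of transpositions has no valid instances of \ref{prop:kltetel2} to invoke, so the proof as proposed does not go through; an analogous feasibility verification (again via maximality and \ref{prop:exchange}) is also needed, but only sketched, for the substitution word $(t_1,\dots,t_{p-1},x,t_{p+1},\dots,t_n)$ in your second case.
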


Since in most applications the objective function only depends on the
feasible sets themselves and not on their orderings, one would want to
formulate the corresponding corollary of
Theorem~\ref{thm:kl}. Obviously, \ref{prop:kltetel2} is automatically
fulfilled in these cases, however, it is not at all straightforward to
specialize \ref{prop:kltetel1} to such objective functions.  Both in
\cite{KL1} and \cite[Chapter XI, condition (1.4)]{KLS} it is claimed
that for objective functions $w:\mathcal{F}\rightarrow\mathbb{R}$
\ref{prop:kltetel1} is equivalent to the following:
\begin{numberedlist}[start=3]
\item \label{prop:klrossz}
  If $A,B,A+x,B+x\in\mathcal{F}$ hold for some sets $A\subseteq B$
  and $x\in S-B$, and $w(A+x)\ge w(A+y)$ for every $y\in\Gamma(A)$ then 
  $w(B+x)\ge w(B+z)$ for every $z\in\Gamma(B)$.
\end{numberedlist}
This reformulation, however, clearly disregards the fact that
$\{a_1,\ldots,a_i,$ $b_1,\ldots,$ $b_j,c_1,\ldots,c_k\}$ need not be a
feasible set. In actual fact, \ref{prop:klrossz} does not guarantee
the optimality of the greedy algorithm as shown by the trivial example
of Figure~\ref{figure:1}: consider the undirected branching greedoid
of the graph on the left hand side and let the objective function be
defined as in the table on the right hand side. It is easy to check
that \ref{prop:klrossz} is fulfilled, however, the greedy algorithm
gives $\{a,c\}$ instead of $\{b,c\}$. On the other hand,
\ref{prop:kltetel1} is clearly violated: $a$ is the best continuation
of $\emptyset$ but $w((a,c))<w((b,c))$.

\addtolength{\arraycolsep}{3pt}

\begin{figure}[h]
\noindent\hfill\raisebox{-18pt}{\scalebox{.8}{\includegraphics{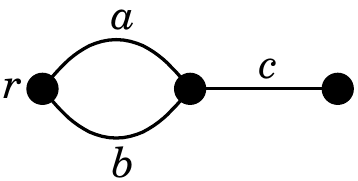}}}\hfill%
$\begin{array}{cccccc}\toprule
X & \emptyset & \{a\} & \{b\} & \{a,c\} & \{b,c\}\\\midrule
w(X) & 0 & 2 & 1 & 3 & 4\\\bottomrule
\end{array}$\hfill\ 
\caption{Property~\ref{prop:klrossz} does not imply
  property~\ref{prop:kltetel1} for order-independent objective
  functions, nor does it guarantee the optimality of the greedy
  algorithm} 
\label{figure:1}
\end{figure}

Unfortunately, as innocuous as the above mistake might look, it led
the authors of \cite{KLS} to the following false claim (see \cite[page
  156]{KLS}): if $G=(S,\mathcal{F})$ is a local poset greedoid and 
$w:\mathcal{F}\rightarrow\mathbb{R}$ is defined as
$w(A)=\sum\{c(P_x^A):x\in A\}$ for a $c:S\rightarrow\mathbb{R}^+$
analogously to Example~\ref{ex:dijkstra},
then the greedy algorithm finds a minimum base with respect to
$w$. To disprove this, let $S=\{x,y,z,u\}$, 
$\mathcal{F}=\left\{\emptyset, \{x\}, \{y\}, \{x,y\}, \{x,u\},
\{x,y,z\}, \{x,z,u\}\right\}$ and $c(x)=3$, $c(y)=2$,
$c(z)=c(u)=0$. Then it is easy to check that $(S,\mathcal{F})$ is a
local poset greedoid, but since the greedy algorithm starts with
choosing $y$, it terminates with $\{x,y,z\}$ which is not minimum as
$w(\{x,y,z\})=10$ and $w(\{x,z,u\})=9$. 

Moreover, it is worth noting that while the optimality of Dijkstra's
algorithm does follow from Theorem~\ref{thm:kl} for directed graphs,
it does not follow in the undirected case as shown by the example of
Figure~\ref{figure:2}: although $x$ is the best continuation of the
empty set, $11=w((x,b,a))>w((z,b,a))=10$, hence $(-w)$ violates
\ref{prop:kltetel1}. 

\begin{figure}[h]
\noindent\hfill\raisebox{-18pt}{\scalebox{.8}{\includegraphics{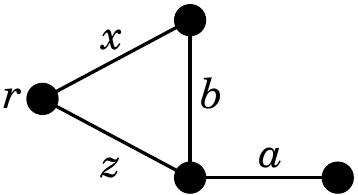}}}\hfill%
$\begin{array}{ccccc}\toprule
e & x & z & a & b\\\midrule
c(e) & 1 & 2 & 0 & 4\\\bottomrule
\end{array}$\hfill\ 
\caption{The optimality of Dijkstra's algorithm does not follow from
  Theorem~\ref{thm:kl} for undirected graphs}
\label{figure:2}
\end{figure}

\addtolength{\arraycolsep}{-3pt}

On the other hand, the following was shown in \cite{Boyd}.

\begin{theorem}[E. A. Boyd, 1988 \cite{Boyd}]\label{thm:boyd}
Let $G=(S,\mathcal{F})$ be a local forest greedoid,
$c:S\rightarrow\mathbb{R}^+$ a non-negative valued weight function and 
$w(A)=\sum\{c(P_x^A):x\in A\}$ for every feasible set
$A\in\mathcal{F}$. Then the greedy algorithm finds a minimum base with
respect to $w$.
\end{theorem}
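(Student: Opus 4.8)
The plan is to lift the correctness proof of Dijkstra's algorithm (Example~\ref{ex:dijkstra}) to the abstract setting, so the central object is the ``distance'' $d(y):=\min\{c(F):F\in\mathcal{F},\,y\in F\}$, which is well defined and nonnegative for every $y$ contained in some feasible set. First I would record a purely formal \emph{path-stability} fact: if $A\in\mathcal{F}$, $e\in\Gamma(A)$ and $z\in A$, then $P_z^{A+e}=P_z^A$, since the unique minimal feasible subset containing $z$ (unique by \ref{lip}) cannot become smaller when the ambient feasible set grows. Consequently $w(A+e)=w(A)+c\bigl(P_e^{A+e}\bigr)$, so the greedy algorithm, in its minimizing version, just keeps choosing the continuation $e$ of the current set $A$ that minimizes the ``tentative label'' $\ell_A(e):=c\bigl(P_e^{A+e}\bigr)$; writing the greedy output as $B=\{b_1,\dots,b_m\}$ in the order picked and $A_k=\{b_1,\dots,b_k\}$, this gives $w(B)=\sum_{k=1}^m\ell_{A_{k-1}}(b_k)$.

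The core of the proof is the invariant, to be proved by induction on $k$: for every $k$ and every $y\notin A_{k-1}$ that is \emph{reachable from} $A_{k-1}$ (meaning $A_{k-1}\cup\{y\}\subseteq F$ for some $F\in\mathcal{F}$) one has $\ell_{A_{k-1}}(b_k)\le d(y)$. Taking $y=b_k$ (reachable via $A_k$) and combining with the trivial bound $d(b_k)\le\ell_{A_{k-1}}(b_k)$ yields $\ell_{A_{k-1}}(b_k)=d(b_k)$, hence $w(B)=\sum_k d(b_k)$ and $d(b_1)\le\dots\le d(b_m)$. The base case $k=1$ is easy: $\ell_\emptyset(b_1)=c(b_1)$ is the smallest $c$-value of a singleton feasible set, and the first element of a feasible ordering of any feasible $F\ni y$ is such a singleton, so $c(b_1)\le c(F)$ by nonnegativity of $c$. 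For the inductive step I would take a cheapest feasible set $Z$ containing $y$, replace it by $P_y^Z$ so that $Z$ becomes a path with terminal element $y$ and $c(Z)=d(y)$, and apply Theorem~\ref{thm:lfgpath}: $Z$ has a unique feasible ordering $(z_1,\dots,z_t,y)$ and every prefix $\{z_1,\dots,z_i\}$ is again a path. If all $z_i$ lie in $A_{k-1}$ then $Z\subseteq A_{k-1}+y\subseteq F$, so $A_{k-1}+y=A_{k-1}\cup Z\in\mathcal{F}^{\vee}$ and \ref{lup} gives $A_{k-1}+y\in\mathcal{F}$; hence $y\in\Gamma(A_{k-1})$, $P_y^{A_{k-1}+y}\subseteq Z$, and $\ell_{A_{k-1}}(b_k)\le\ell_{A_{k-1}}(y)\le c(Z)=d(y)$. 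Otherwise, with $z_i$ the first element of the ordering outside $A_{k-1}$ and $Q=\{z_1,\dots,z_i\}$ the corresponding prefix-path, one has $d(z_i)\le c(Q)\le c(Z)=d(y)$, and I would recurse to $z_i$ — legitimately on the strictly smaller distance if $d(z_i)<d(y)$, and by reduction to the previous case if $d(z_i)=d(y)$, since then $Q$ is itself a cheapest feasible set containing $z_i$ with $Q\subseteq A_{k-1}+z_i$.

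The step I expect to be the real obstacle is exactly what makes that recursion legal: the \emph{compatibility} claim that a cheapest feasible set containing $y$ may be chosen inside a feasible superset of $A_{k-1}$, so that its first element $z_i$ outside $A_{k-1}$ is again reachable from $A_{k-1}$. This is where \ref{lfp} (equivalently Schmidt's theorem) is genuinely used and cannot be dispensed with — the whole statement already fails for general local poset greedoids, as the $c(x)=3$ counterexample in the Introduction shows. The fix, mirroring the way Dijkstra's proof implicitly reroutes a shortest path through the already settled vertices, is to carry the compatibility claim along as part of the induction and to re-establish it at each augmentation step, by a pivoting argument on the path $Z$ that uses the uniqueness of feasible orderings of paths together with the local exchange behaviour encoded in \ref{lip} and \ref{lfp} to move $Z$ towards $A_{k-1}$ without increasing its cost.

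Granting the invariant, the theorem follows by a short counting argument, and I do not expect any difficulty here. For an arbitrary base $B^{*}$ we have $w(B^{*})=\sum_{x\in B^{*}}c\bigl(P_x^{B^{*}}\bigr)\ge\sum_{x\in B^{*}}d(x)$, since each $P_x^{B^{*}}$ is a feasible set containing $x$, so it suffices to prove $\sum_{k=1}^m d(b_k)\le\sum_{x\in B^{*}}d(x)$. Form the bipartite graph joining each $k\in\{1,\dots,m\}$ to every $y\in B^{*}$ that lies outside $A_{k-1}$ and is reachable from $A_{k-1}$; since reachability is anti-monotone in the settled set these neighbourhoods are nested, and repeatedly augmenting $A_{k-1}$ from $B^{*}$ via \ref{prop:exchange} produces a feasible set of size $m=|B^{*}|$ containing $A_{k-1}$ together with $m-k+1$ distinct elements of $B^{*}\setminus A_{k-1}$, so vertex $k$ has at least $m-k+1$ neighbours and Hall's condition holds. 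A perfect matching $k\mapsto y_k$ then supplies distinct $y_k\in B^{*}$, all outside the respective $A_{k-1}$, with $d(b_k)\le d(y_k)$ by the invariant; hence $w(B)=\sum_k d(b_k)\le\sum_k d(y_k)=\sum_{x\in B^{*}}d(x)\le w(B^{*})$, so $B$ is a minimum base.
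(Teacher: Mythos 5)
Your architecture is genuinely different from the paper's. The paper does not prove Theorem~\ref{thm:boyd} directly: it derives it as the special case $f(P)=c(P)$ of Theorem~\ref{thm:dijkstra}, which is in turn proved by verifying the exchange-type condition \ref{prop:intgreedyopt} for $(-w)$ and invoking the general optimality criteria of Theorems~\ref{thm:greedyopt} and \ref{thm:intgreedyopt}, with Lemma~\ref{lem:dijkstra} supplying the key structural fact about paths. You instead run a Dijkstra-style label argument with the global distance $d(y)$ and finish by matching the greedy base against an arbitrary base $B^{*}$ via Hall's theorem. Several layers of your plan are correct as written: path stability and the telescoping $w(B)=\sum_k \ell_{A_{k-1}}(b_k)$, the base case $k=1$, the bound $w(B^{*})\ge\sum_{x\in B^{*}}d(x)$, and the concluding matching argument (nested neighbourhoods, $|N(k)|\ge m-k+1$ by repeated augmentation via \ref{prop:exchange}) all go through once the invariant is granted.

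The proof is nevertheless incomplete, and at its load-bearing point. The inductive step of your invariant rests entirely on the ``compatibility claim'' --- that a cheapest feasible set $Z\ni y$ can be chosen, or rerouted, so that $A_{k-1}\cup Z$ is subfeasible, or at least so that the first element $z_i$ of the unique ordering of $Z$ outside $A_{k-1}$ is again reachable from $A_{k-1}$ --- and you only assert this as ``a pivoting argument using \ref{lip} and \ref{lfp}''. Without it neither branch of your case analysis closes: in the branch $d(z_i)=d(y)$ you cannot apply \ref{lup} to get $A_{k-1}+z_i\in\mathcal{F}$ because you have not shown $A_{k-1}+z_i\in\mathcal{F}^{\vee}$, and in the branch $d(z_i)<d(y)$ the recursive call applies the invariant to $z_i$, which again presupposes that $z_i$ is reachable from $A_{k-1}$. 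This is not a routine detail to be filled in later: it is exactly the point where \ref{lfp} must do real work --- the theorem is false for local poset greedoids, as the four-element example in Section~\ref{sect:prelimgreedy} shows --- and it is the counterpart of the structural work the paper invests in Lemma~\ref{lem:dijkstra} and in the path-ordering manipulations (via Theorem~\ref{thm:lfgpath}) inside the proof of Theorem~\ref{thm:dijkstra}. Until you state and prove a precise rerouting lemma of this kind, the argument has a genuine gap.
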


Since both undirected and directed branching greedoids are local
forest greedoids, the above theorem implies the optimality of
Dijkstra's algorithm both for undirected and directed graphs.  A
generalization of Theorem~\ref{thm:boyd} will be given in
Section~\ref{sect:greedyalg} (see Theorem~\ref{thm:dijkstra}) the
proof of which will also be shorter than the rather technical one
given in \cite{Boyd}.

Although the optimality of the greedy algorithm is a central topic in
the theory of greedois, most results regarding this question are about
linear objective functions. In \cite{KL2} Korte and Lov\'asz proved
that on an arbitrary greedoid $G=(S,\mathcal{F})$ the greedy algorithm
is optimal for all linear objective functions if and only if the
following \emph{strong exchange axiom} is fulfilled: for every
$A\subseteq B$, $A\in\mathcal{F}$, $A+x\in\mathcal{F}$,
$B\in\mathcal{B}$ and $x\in S-B$ there exists a $y\in B-A$ such that
$B-y+x\in\mathcal{B}$ and $A+y\in\mathcal{F}$. A generalization of
this result to arbitrary objective functions will be given in
Section~\ref{sect:greedyalg} (see Theorem~\ref{thm:greedyopt}).  In
\cite{HMS} another generalization of the above result of Korte and
Lov\'asz \cite{KL2} was given: a necessary and sufficient condition for
the optimality of the greedy algorithm for linear objective functions
on accessible set systems. In \cite{Mao} a variant of the greedy
algorithm on interval greedoids that ``looks two step ahead'' is
defined and a necessary and sufficient condition for its optimality on
linear objective functions is derived. As for general (that is, not
necessarily linear) and possibly order-dependent objective functions a
generalization of Theorem~\ref{thm:kl} was most recently given in
\cite{Sz3} that, among other applications, completely covers
Example~\ref{ex:dijkstra} (also for undirected graphs).

\section{A Polyhedral Result}\label{sect:mainresult}

In this section we prove a generalization of Edmonds' classic matroid
polytope theorem to local forest greedoids. 

\begin{theorem}[J. Edmonds, 1971 \cite{Edmonds}]\label{thm:edmonds}
Let $M=(S,\mathcal{F})$ be a matroid with rank function
$r$ and let $P_{\text{ind}}(M)$ denote the polytope 
spanned by the incidence vectors of all independent sets of $M$. Then
\[\displaystyle
P_{\text{ind}}(M)=\left\{x\in\mathbb{R}^S:
x(U)\le r(U)\mbox{ for all }U\subseteq S, 
x(s)\ge 0\mbox{ for all }s\in S\right\}.\]
\end{theorem}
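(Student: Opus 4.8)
The plan is to prove the two inclusions of the claimed identity separately, writing $P$ for the polyhedron on the right-hand side. I would begin by noting that $P$ is a non-empty, bounded polyhedron (hence a polytope): $\mathbf 0\in P$ because the rank function is non-negative, and $0\le x(s)=x(\{s\})\le r(\{s\})\le 1$ shows $P\subseteq[0,1]^S$. The inclusion $P_{\text{ind}}(M)\subseteq P$ is then immediate: for every independent set $I\in\mathcal F$ the incidence vector $\chi_I$ is non-negative and satisfies $\chi_I(U)=|I\cap U|\le r(U)$, since $I\cap U$ is an independent subset of $U$; as $P$ is convex it contains the convex hull of these vectors, which is $P_{\text{ind}}(M)$.

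For the reverse inclusion $P\subseteq P_{\text{ind}}(M)$ I would use the separating-hyperplane reduction: since both sets are compact and convex, it suffices to check that $\max\{c\cdot x:x\in P\}\le\max\{c(J):J\in\mathcal F\}$ for every $c\in\mathbb R^S$, because any point of $P$ lying outside $P_{\text{ind}}(M)$ would be separated from it by some $c$ violating this inequality. Dropping the elements of negative weight (which get value $0$ in any maximiser on either side) we may assume $c\ge 0$. Relabel $S=\{s_1,\dots,s_n\}$ so that $c(s_1)\ge c(s_2)\ge\dots\ge c(s_n)\ge 0$, put $U_i=\{s_1,\dots,s_i\}$ with $U_0=\emptyset$, and let $I\in\mathcal F$ be the output of the matroid greedy rule ``add $s_i$ precisely when $r(U_i)>r(U_{i-1})$''. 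A routine induction (using property~\ref{prop:exchange} together with the base characterisation of matroids) shows that $I\cap U_i$ is a base of $U_i$ for each $i$, so $|I\cap U_i|=r(U_i)$ and hence $c(I)=\sum_{i=1}^n c(s_i)\bigl(r(U_i)-r(U_{i-1})\bigr)$.

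The crux is then a single summation-by-parts estimate along the chain $U_1\subset\dots\subset U_n$. For an arbitrary $x\in P$, writing $c(s_{n+1}):=0$ and using $x(s_i)=x(U_i)-x(U_{i-1})$ with $x(U_0)=0$, one gets
\[c\cdot x=\sum_{i=1}^n c(s_i)\,x(s_i)=\sum_{i=1}^n\bigl(c(s_i)-c(s_{i+1})\bigr)\,x(U_i)\le\sum_{i=1}^n\bigl(c(s_i)-c(s_{i+1})\bigr)\,r(U_i)=c(I),\]
where the inequality uses $x(U_i)\le r(U_i)$ and $c(s_i)-c(s_{i+1})\ge 0$, and the last equality is the same identity read in reverse together with $r(U_0)=0$. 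Since $\chi_I\in P$ attains this bound, $\max\{c\cdot x:x\in P\}=c(I)\le\max\{c(J):J\in\mathcal F\}$, as required.

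I expect the main obstacle to be the careful bookkeeping of the second inclusion rather than any deep difficulty: one must remember that $P$ is bounded so that the separation argument applies, dispose of the negative-weight elements correctly, and — the one genuinely structural point — notice that the \emph{sorted-by-weight} ordering is exactly what forces the tight rank inequalities to nest into a chain, which is what makes the telescoping work. In this packaging no uncrossing or submodularity argument is needed, since $|I\cap U|\le r(U)$ for \emph{every} $U$ already follows from $I\in\mathcal F$; the only matroid-specific input is that the greedy construction delivers a base of each $U_i$. (Alternatively one could prove $P\subseteq P_{\text{ind}}(M)$ by showing that every vertex of $P$ is $0/1$-valued via an uncrossing argument on its tight constraints, but the route above is closer to the greedy themes of this paper and re-proves matroid greedy optimality as a by-product.)
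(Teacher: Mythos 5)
Your proof is correct. Note that the paper does not actually prove Theorem~\ref{thm:edmonds}: it is cited as a classical result, with the reader referred to Schrijver for the equivalent reformulation. Your argument --- building the greedy independent set along the weight-sorted chain $U_1\subset\dots\subset U_n$ and bounding $c\cdot x$ by summation by parts, which implicitly exhibits the dual solution $y(U_i)=c(s_i)-c(s_{i+1})$ --- is precisely Edmonds' original greedy/LP-duality proof, the very template the paper says it follows (in adapted form, via Theorem~\ref{thm:lfgdualsol}) to prove its own Theorem~\ref{thm:main}.
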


The theorem has some equivalent formulations, the one of relevance for
the purposes of this paper is the following. The \emph{up-hull} of a
polyhedron $P\subseteq\mathbb{R}^n$, denoted by $P^{\uparrow}$, is
defined as $P^{\uparrow}=\{x\in\mathbb{R}^n:\exists z\in P, z\le
x\}$; in other words, $P^{\uparrow}$ is the Minkowski-sum of $P$ and
the non-negative orthant of $\mathbb{R}^n$ (and as such, it is also a
polyhedron). 

\begin{theorem}\label{thm:matroidbaseuphull}
Let $M=(S,\mathcal{F})$ be a matroid with rank function
$r$ and let $P_{\text{base}}(M)$ denote the polytope 
spanned by the incidence vectors of all bases of $M$. Then
\[\displaystyle
P_{\text{base}}^{\uparrow}(M)=\left\{x\in\mathbb{R}^S:
x(U)\ge r(S)-r(S-U)\mbox{ for all }U\subseteq
S\right\}.\]
\end{theorem}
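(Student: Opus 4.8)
The plan is to prove the two set inclusions separately, using Edmonds' theorem (Theorem~\ref{thm:edmonds}) as the starting point and then transferring the description of $P_{\text{ind}}(M)$ to a description of $P_{\text{base}}^{\uparrow}(M)$ via complementation/duality. First I would recall the relationship between bases and the maximal independent sets: the incidence vector $\chi_B$ of a base $B$ satisfies $\chi_B = \mathbf{1} - \chi_{S-B}$, and $S-B$ is (the complement of a base, hence) an independent set in the dual matroid $M^*$, whose rank function is $r^*(U) = |U| - r(S) + r(S-U)$. Thus $P_{\text{base}}(M)$ is the image of $P_{\text{ind}}^{\max}(M^*)$ (the polytope of maximal independent sets of $M^*$) under the affine map $y \mapsto \mathbf{1} - y$. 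Taking up-hulls, $P_{\text{base}}^{\uparrow}(M)$ becomes the image of the \emph{down-hull} of $P_{\text{ind}}(M^*)$ under the same reflection, and the down-hull of $P_{\text{ind}}(M^*)$ is just $P_{\text{ind}}(M^*)$ itself since independence polytopes are already down-closed. So the task reduces to showing $\mathbf{1} - P_{\text{ind}}(M^*) + (\text{nonnegative orthant})$ equals the right-hand side.

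Next I would carry out the substitution explicitly. By Edmonds' theorem applied to $M^*$, a point $y$ lies in $P_{\text{ind}}(M^*)$ iff $y \ge 0$ and $y(U) \le r^*(U) = |U| - r(S) + r(S-U)$ for all $U \subseteq S$. Writing $x = \mathbf{1} - y$ so that $y(U) = |U| - x(U)$, the constraint $y(U) \le |U| - r(S) + r(S-U)$ rearranges to $x(U) \ge r(S) - r(S-U)$, which is exactly the inequality in the statement; and the constraint $y \ge 0$ becomes $x \le \mathbf{1}$. Hence $\mathbf{1} - P_{\text{ind}}(M^*) = \{x : x(U) \ge r(S) - r(S-U)\ \forall U,\ x \le \mathbf{1}\}$. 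Adding the nonnegative orthant (taking the up-hull) should drop the upper bound $x \le \mathbf{1}$: one checks that the family of constraints $x(U) \ge r(S) - r(S-U)$ is preserved under increasing any coordinate (the left side only grows), so the up-hull of the bounded set is contained in the RHS; conversely any $x$ in the RHS dominates some point with $x \le \mathbf{1}$ still satisfying all constraints, obtained by decreasing large coordinates down to $1$ — here one must verify the constraints still hold after this truncation.

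The main obstacle is this last verification: showing that truncating the coordinates of an RHS-point down to $1$ keeps all the rank inequalities satisfied, i.e. that every $x$ in the RHS lies in the up-hull of the bounded polytope. The clean way is submodularity of $r$ together with the observation that $r(S) - r(S-U) \le |U|$ always (so the constraint for $U$ never forces $x(U) > |U|$), but getting from ``each constraint individually allows $x(U)\le|U|$'' to ``there is a single point $z \le x$ with $z \le \mathbf{1}$ satisfying all of them simultaneously'' requires an argument — I would either invoke that $P_{\text{ind}}(M^*)$ is a polymatroid (so its up-reflection is described by the supermodular function $U \mapsto r(S) - r(S-U)$ with the standard box-free description) or give a direct greedy/uncrossing argument decreasing one coordinate at a time while maintaining feasibility via submodularity. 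An alternative, possibly slicker route that avoids the dual matroid entirely is to observe that $P_{\text{base}}^{\uparrow}(M) = \{x \ge 0 : x(U) \ge r(S) - r(S - U)\}$ is the standard contra-polymatroid / dominant of the base polytope, a fact derivable directly from Theorem~\ref{thm:edmonds} by LP duality on $\min\{c^\top x : x \in P_{\text{ind}}(M)\}$ type arguments; but the complementation approach above is the most transparent and is the one I would write up.
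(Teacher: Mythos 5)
Your proposal is correct and follows essentially the same route as the paper, which likewise applies Theorem~\ref{thm:edmonds} to the dual matroid to obtain the spanning-set polytope $\left\{x\in[0,1]^S : x(U)\ge r(S)-r(S-U)\ \forall U\right\}$, identifies it with $P_{\text{base}}^{\uparrow}(M)\cap[0,1]^S$, and defers the remaining details to Schrijver. The one step you flag as the obstacle does close cleanly (and without submodularity): for $x$ in the right-hand side the truncation $z=\min(x,\mathbf{1})$ still satisfies every constraint, since with $U_1=\{u\in U: x_u>1\}$ one has $z(U)=|U_1|+x(U-U_1)\ge |U_1|+r(S)-r\bigl(S-(U-U_1)\bigr)\ge r(S)-r(S-U)$, the last inequality being the unit-increase property of the rank function applied to removing the $|U_1|$ elements of $U_1$ from $S-(U-U_1)$.
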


As claimed above, this theorem is just a reformulation of
Theorem~\ref{thm:edmonds}. Indeed, by applying
Theorem~\ref{thm:edmonds} to the dual of a matroid one gets a
description of the polytope spanned by the incidence vectors of all
spanning sets (that is, sets containing a base of $M$); then it is
easy to check that this polytope is nothing but the intersection of
$P_{\text{base}}^{\uparrow}(M)$ and the hypercube $[0,1]^S$. The
details are given in \cite[Chapter 40.2]{Schrijver}.

\begin{definition}\label{dfn:shadow}
Given a greedoid $G=(S,\mathcal{F})$, a feasible set $A\in\mathcal{F}$
and an $x\in S$, the \emph{shadow} of $x$ on $A$ is
$sh_A(x)=|A|-r(A-x)$. The \emph{shadow vector} of $A$ is
the vector $sh_A\in\mathbb{R}^S$ for which $sh_A(x)$ is the shadow of
$x$ on $A$ for every $x\in S$. The \emph{shadow polytope}
$P_{\text{shadow}}(G)$ of $G$ is defined as the polytope spanned by
the shadow vectors of all bases of $G$.
\end{definition}

For example, if $G=(E,\mathcal{F})$ is the undirected branching
greedoid of a graph $H$ with root node $r$, $A\in\mathcal{F}$ is the
edge set of a subtree $T=(V_T,A)$ of $H$ such that $r\in V_T$ and
$x\in E$ is arbitrary then it is easy to check that $sh_A(x)$ is the
number of nodes in $V_T$ that are unreachable via a path from $r$ in
$(V_T, A-x)$.  Obviously, in every greedoid $sh_A(x)=0$ if and only if
$x\notin A$. Furthermore, if $M$ is a matroid then ${sh_A(x)=1}$ is
obvious for every $x\in A$ and hence $sh_A$ is nothing but the
incidence vector of $A$. Consequently,
$P_{\text{shadow}}(M)=P_{\text{base}}(M)$ holds for every matroid $M$.

The significance of the notion of the shadow vector for local poset
greedoids is indicated by the following lemma: it shows that for every
weight function $c:S\rightarrow\mathbb{R}$ on the ground set, the
value of the objective function already seen in
Example~\ref{ex:dijkstra} is the dot product of the shadow vector and
$c$. This observation, together with Theorem~\ref{thm:boyd}, implies
that for local forest greedoids the greedy algorithm minimizes
non-negative, linear objective functions over the shadow
polytope. This fact will greatly be relied on in the proof of
Theorem~\ref{thm:main}.

Recall that $\Delta(X)$ denotes the unique base of a subfeasible set
$X\in\mathcal{F}^{\vee}$ in interval greedoids.

\begin{lemma}\label{lem:objfn}
Let $G=(S,\mathcal{F})$ be a local poset greedoid, 
$A\in\mathcal{F}$ and $c:S\rightarrow\mathbb{R}$ a weight
function. Then $\sum_{x\in A}c(x)\cdot sh_A(x)=\sum_{x\in A}c(P_x^A)$.
\end{lemma}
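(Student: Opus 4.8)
The plan is to expand both sides of the claimed identity by summing over the elements whose presence in the relevant paths is responsible for each contribution, i.e.\ to swap the order of summation. For the right-hand side, write $\sum_{x\in A}c(P_x^A)=\sum_{x\in A}\sum_{y\in P_x^A}c(y)=\sum_{y\in A}c(y)\cdot\big|\{x\in A: y\in P_x^A\}\big|$, where the last equality uses that every path $P_x^A$ is a subset of $A$ (since $x\in A\in\mathcal F$ forces $P_x^A\subseteq A$ by minimality). Similarly, the left-hand side is already in the form $\sum_{y\in A}c(y)\cdot sh_A(y)$ after renaming the summation variable. So it suffices to prove, for each fixed $y\in A$, the combinatorial identity
\[
sh_A(y)=|A|-r(A-y)=\big|\{x\in A: y\in P_x^A\}\big|.
\]

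The key step is therefore to understand $r(A-y)$, the size of a base of $A-y$. First I would observe that $A-y$ is subfeasible (it is a subset of $A\in\mathcal F$), so by the local union property it has a unique base $\Delta(A-y)$, namely the union of all feasible subsets of $A-y$; by interval-greedoid structure this is the largest feasible subset of $A-y$, and $r(A-y)=|\Delta(A-y)|$. The natural claim is that $\Delta(A-y)=\{x\in A: y\notin P_x^A\}$. One inclusion should be straightforward: if $y\notin P_x^A$ then $P_x^A\subseteq A-y$ is a feasible set containing $x$, so $x$ lies in the unique base $\Delta(A-y)$. For the reverse inclusion, suppose $x\in\Delta(A-y)$; then $\Delta(A-y)$ is a feasible subset of $A$ containing $x$ but not $y$, and since $P_x^A$ is the \emph{unique minimum-size} feasible set in $A$ containing $x$ (by the local intersection property), we need to argue $P_x^A\subseteq\Delta(A-y)$, which would give $y\notin P_x^A$. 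This requires knowing that paths are ``hereditary'' inside feasible sets: for any feasible $B$ with $x\in B\subseteq A$, the $x$-path computed in $A$ equals the $x$-path computed in $B$, i.e.\ $P_x^A=P_x^B\subseteq B$. That follows from the uniqueness of minimum-size feasible sets containing $x$ together with the local union property, applied to $P_x^A$ and the smallest feasible subset of $B$ containing $x$ (their union is feasible and contained in $A$, forcing them to coincide).

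Once the identity $\Delta(A-y)=\{x\in A: y\notin P_x^A\}$ is established, counting complements within $A$ gives $r(A-y)=|A|-\big|\{x\in A: y\in P_x^A\}\big|$, hence $sh_A(y)=\big|\{x\in A: y\in P_x^A\}\big|$, and summing against $c(y)$ finishes the proof. I expect the main obstacle to be the hereditariness of paths under restriction to a feasible subset of $A$ — i.e.\ proving $P_x^A=P_x^B$ whenever $x\in B\subseteq A$ and $B\in\mathcal F$ — since this is the place where the local poset axioms \ref{lup} and \ref{lip} must be invoked carefully; note that the local forest property \ref{lfp} is not needed here, only the local poset structure, consistent with the hypothesis of the lemma. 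Everything else is bookkeeping with the double sum.
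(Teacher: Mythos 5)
Your proposal is correct and takes essentially the same route as the paper: both reduce the lemma to the identity $\Delta(A-y)=\{x\in A:y\notin P_x^A\}$ (equivalently, $y\in P_x^A$ iff $x\in A-\Delta(A-y)$), proved via the local union and intersection properties, and then exchange the order of summation to convert $\sum_{x\in A}c(P_x^A)$ into $\sum_{y\in A}sh_A(y)\cdot c(y)$. The only cosmetic difference is that the paper handles the ``hereditariness'' of paths implicitly (via $P_x^A\subseteq\Delta(A-y)$ whenever $x\in\Delta(A-y)$, using that $P_x^A$ is the unique minimum feasible set containing $x$) rather than isolating it as a separate step.
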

\begin{proof}
We claim that $y\in P_x^A$ if and only if $x\in A-\Delta(A-y)$ for
every $x,y\in A$. Indeed, $x\in\Delta(A-y)$ implies $P_x^A\subseteq
\Delta(A-y)$ by $\Delta(A-y)\in\mathcal{F}$, hence $x\in
A-\Delta(A-y)$ follows from $y\in P_x^A$. The converse follows from
the local union property: since $P_x^A,\Delta(A-y)\in\mathcal{F}$, 
$P_x^A,\Delta(A-y)\subseteq A$, $P_x^A\cup\Delta(A-y)\in\mathcal{F}$
holds and thus $y\in P_x^A$ by the definition of $\Delta(A-y)$.

Then the lemma follows by 
\[\sum_{x\in A}c(P^A_x)=\sum_{y\in A}|A-\Delta(A-y)|\cdot c(y)
=\sum_{y\in A}sh_A(y)\cdot c(y).\quad\qed\]
\end{proof}

We mentioned above that for matroids the shadow polytope and the
base polytope coincide. Therefore the following theorem, which is the
main result of this section, is indeed a direct generalization of
Theorem~\ref{thm:matroidbaseuphull}.

\begin{theorem}\label{thm:main}
Let $G=(S,\mathcal{F})$ be a local forest greedoid with rank function
$r$. Then
\[P_{\text{shadow}}^{\uparrow}(G)=\left\{x\in\mathbb{R}^S:
x(U)\ge r(S)-r(S-U)\mbox{ for all }U\subseteq
S\right\}.\]
\end{theorem}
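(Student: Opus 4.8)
The plan is to prove the two inclusions separately; denote the right-hand polyhedron by $Q = \{x\in\mathbb{R}^S : x(U)\ge r(S)-r(S-U)\text{ for all }U\subseteq S\}$. For the inclusion $P_{\text{shadow}}^{\uparrow}(G)\subseteq Q$, it suffices to check that every shadow vector $sh_B$ of a base $B\in\mathcal{B}$ satisfies the inequalities, and then note that the inequalities define an up-closed set (each coefficient vector $\mathbf{1}_U$ is non-negative), so the up-hull stays inside. Fix $U\subseteq S$ and a base $B$. We have $sh_B(U)=\sum_{x\in U}(|B|-r(B-x))$. Using Lemma~\ref{lem:objfn} with $c=\mathbf{1}_U$, this equals $\sum_{x\in B}|P_x^B\cap U|=\sum_{x\in B}[P_x^B\cap U\ne\emptyset]$ only after regrouping; more directly, $sh_B(U)=\sum_{y\in B\cap U}|B-\Delta(B-y)|$. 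The point is to show $sh_B(U)\ge r(S)-r(S-U)=|B|-r(S-U)$. Let $F$ be a base of $S-U$ inside $G$, so $|F|=r(S-U)$ and $F\in\mathcal{F}$, $F\subseteq S-U$. I would argue that $F$ is contained in $\Delta(B)$-type structure; more usefully, since $B$ is a base, augment $F$ from $B$ repeatedly via \ref{prop:exchange} to build a feasible set; the elements added all lie in $B$, and the ones forced to come from $B\cap U$ account for the shadow. The cleaner route: show that $|\{x\in B: P_x^B\cap U\ne\emptyset\}|\ge |B|-r(S-U)$, because the elements $x$ with $P_x^B\subseteq S-U$ form a feasible subset of $S-U$ (a union of paths, feasible by iterated \ref{lup}), hence number at most $r(S-U)$; and for each $x$ with $P_x^B\cap U\ne\emptyset$ we have $sh_B(x)\ge 1$ while also $sh_B$ counts each $y\in U$ with multiplicity $|B-\Delta(B-y)|\ge 1$, giving $sh_B(U)\ge |\{x: P_x^B\cap U\ne\emptyset\}|\ge |B|-r(S-U)$.

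For the reverse inclusion $Q\subseteq P_{\text{shadow}}^{\uparrow}(G)$, I would use LP duality / the separation principle: since both sides are polyhedra and $Q$ is pointed-from-below in the sense that it contains the non-negative orthant shifted, it is enough to show that for every weight vector $c\in\mathbb{R}^S$ the linear program $\min\{c^Tx : x\in Q\}$ is either unbounded below, or attains its minimum at a point of $P_{\text{shadow}}^{\uparrow}(G)$. If $c$ has a negative coordinate then, because $Q$ is up-closed, $\min\{c^Tx:x\in Q\}=-\infty$ and the same holds over $P_{\text{shadow}}^{\uparrow}(G)$, so there is nothing to prove. Hence assume $c\ge 0$. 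Now invoke Theorem~\ref{thm:boyd} together with Lemma~\ref{lem:objfn}: the greedy algorithm produces a base $B^*$ minimizing $w(B)=\sum_{x\in B}c(P_x^B)=c^T sh_B$ over all bases, i.e. $B^*$ minimizes $c^T y$ over $y\in P_{\text{shadow}}(G)$, and therefore over $P_{\text{shadow}}^{\uparrow}(G)$ as well (again using $c\ge 0$). It remains to check $c^T sh_{B^*}=\min\{c^Tx:x\in Q\}$, i.e. that $sh_{B^*}$ is optimal for the LP over $Q$. For this I would exhibit a dual-feasible solution of matching value: run the greedy algorithm, order the chosen continuations so that the corresponding weights $c(P_x^{B^*})$ are nondecreasing along the construction, and for each threshold let $U_t=\{s\in S: \text{the path weight "assigned" to } s \text{ exceeds } t\}$; the standard threshold/chain argument shows $\sum_t (r(S)-r(S-U_t))\cdot(\text{increment}) = c^T sh_{B^*}$, certifying optimality.

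The main obstacle I anticipate is the optimality certificate in the second inclusion — showing that the greedy base $B^*$ is not merely optimal among bases but optimal over all of $Q$. Proving $Q\subseteq P_{\text{shadow}}^{\uparrow}(G)$ amounts to showing that the valid inequalities $x(U)\ge r(S)-r(S-U)$ are sufficient, which is exactly a statement about the LP over $Q$ being solved by a shadow vector of a base. The delicate point is that, unlike the matroid case where $r$ is submodular and the chain of tight sets falls out of submodularity, here $r$ need not be submodular on all of $2^S$; so I would need to identify the right family of sets $U$ on which to place dual weight — presumably sets of the form $\{y : |B^*-\Delta(B^*-y)| \ge k\}$ or unions of "heavy" paths — and verify that for these specific sets the needed (sub)modular-type inequalities hold, leveraging the local union, local intersection, and local forest properties and Theorem~\ref{thm:lfgpath} about unique feasible orderings of paths. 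I expect this is where the bulk of the technical work, and the genuine use of the full "local forest" hypothesis (as opposed to mere "local poset"), will lie.
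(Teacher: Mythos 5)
Your first inclusion is correct and takes a genuinely different (and arguably more transparent) route than the paper's: the paper proves $P_{\text{shadow}}^{\uparrow}(G)\subseteq Q$ via a local supermodularity inequality for the rank function (Lemmas~\ref{lem:lsp} and~\ref{lem:lpgsum}), whereas your ``cleaner route'' --- rewriting $sh_B(U)=\sum_{x\in B}|P_x^B\cap U|$ via the bijection in Lemma~\ref{lem:objfn} and observing that $\{x\in B:P_x^B\cap U=\emptyset\}$ is a union of paths, hence by \ref{lup} a feasible subset of $S-U$ of size at most $r(S-U)$ --- works and uses only the local poset properties, just as the paper's version does. Your reduction of the reverse inclusion to showing that, for every $c\ge 0$, the greedy base $B^*$ (optimal over $P_{\text{shadow}}(G)$ by Theorem~\ref{thm:boyd} and Lemma~\ref{lem:objfn}) also attains $\min\{c^Tx:x\in Q\}$, certified by a feasible dual solution of matching value, is exactly the paper's strategy.

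The genuine gap is the dual construction itself, which is the heart of the proof and which you leave unresolved. Your proposed threshold sets $U_t=\{s:\text{path weight assigned to }s>t\}$ form a chain, and the corresponding increments would telescope to $\sum\{y(U):s\in U\}=c(P_s^{B^*})$ rather than the required $c(s)$: the dual equality constraints involve the \emph{element} weights while your increments are indexed by \emph{path} weights, so dual feasibility fails. The paper explicitly remarks that, in contrast with Edmonds' construction, the correct dual solution here is \emph{not} a chain. What is actually used (Theorem~\ref{thm:lfgdualsol}) is $U_i=\Gamma(B_i)$, the set of continuations of the $i$-th partial greedy set, with $y(U_i)=c(P_i)-c(P_{i-1})$, and three facts make it work: (a) $r(S-U_i)=i-1$ because $B_i$ is a base of $S-\Gamma(B_i)$; (b) for a base element $x$ the indices $i$ with $x\in\Gamma(B_i)$ form an interval $[j,t]$ and $P_x^{B}=P_{j-1}+x$ (a path-structure lemma resting on Theorem~\ref{thm:lfgpath}, i.e.\ on the local forest property), so the increments telescope to exactly $c(x)$; (c) for an element offered but never chosen, the greedy rule gives $c(P_x^{B_k+x})\ge c(P_k)$, which provides the non-negative slack absorbed by a singleton set. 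Neither of your candidate families (level sets of $sh_{B^*}$, unions of heavy paths) contains the non-base elements in the way needed for (c), nor does either admit the telescoping in (b). You correctly diagnose that this is where the difficulty lies, but the construction you sketch would fail, so the proof is incomplete at its crucial step.
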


To prepare the proof of Theorem~\ref{thm:main}, we need the following
lemmas. 

\begin{lemma}[\emph{Local Supermodularity Property}]\label{lem:lsp}
If $G=(S,\mathcal{F})$ is a local poset greedoid then $r(A)+r(B)\le
r(A\cup B)+r(A\cap B)$ holds for $A,B\subseteq S$ if $A\cup
B\in\mathcal{F}^{\vee}$. 
\end{lemma}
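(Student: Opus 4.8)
The plan is to reduce the statement to the well-known submodularity of the matroid rank function by passing to a suitable contraction, and failing that, to argue directly using the uniqueness of bases in $\mathcal{F}^\vee$ together with the exchange property. The key observation is that the hypothesis $A\cup B\in\mathcal{F}^\vee$ is exactly what makes the relevant sets behave matroid-like: every subset of $A\cup B$ of the form we care about has a well-defined rank that is witnessed by a feasible set contained in $A\cup B$, and within the subfeasible set $A\cup B$ the feasible subsets of a fixed set have a unique maximal element (the $\Delta$ operator). So first I would let $Y\in\mathcal F$ be a feasible set with $A\cup B\subseteq Y$, and consider feasible subsets of $A\cup B$; I would pick a base $X_{A\cap B}$ of $A\cap B$, extend it greedily (using \ref{prop:exchange}, augmenting from a base of $A$) to a base $X_A$ of $A$, and similarly extend $X_{A\cap B}$ to a base $X_B$ of $B$.

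Next I would try to control $r(A\cup B)$ from below by combining $X_A$ and $X_B$. The natural candidate is $X_A\cup X_B$: it is a subset of $A\cup B\in\mathcal F^\vee$, and if I can show $X_A\cup X_B\in\mathcal F$ then by \ref{lup} (local union) applied to $X_A,X_B\in\mathcal F$ with $X_A\cup X_B\subseteq A\cup B\in\mathcal F^\vee$ I get $X_A\cup X_B\in\mathcal F$ immediately — so that step is actually free. Hence $r(A\cup B)\ge |X_A\cup X_B| = |X_A|+|X_B|-|X_A\cap X_B|$. Since $X_A\cap X_B$ is a feasible set (by \ref{lip}, local intersection, applied to $X_A,X_B$, again using $X_A\cup X_B\subseteq A\cup B\in\mathcal F^\vee$) contained in $A\cap B$, we have $|X_A\cap X_B|\le r(A\cap B)=|X_{A\cap B}|$. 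Combining, $r(A\cup B)+r(A\cap B)\ge |X_A|+|X_B|-|X_A\cap X_B|+|X_{A\cap B}| \ge |X_A|+|X_B| = r(A)+r(B)$, where the last inequality uses $|X_{A\cap B}|\ge|X_A\cap X_B|$. This is exactly the claimed inequality.

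The main obstacle I anticipate is justifying that the bases of $A\cap B$, $A$, $B$ can be chosen \emph{coherently} — i.e.\ that I may take $X_{A\cap B}\subseteq X_A$ and $X_{A\cap B}\subseteq X_B$ with all three feasible — and more subtly that $X_A\cap X_B$ really is contained in $A\cap B$ (immediate) and that its size is at most $r(A\cap B)$ (immediate from the definition of rank once we know $X_A\cap X_B\in\mathcal F$). The coherent choice of $X_{A\cap B}\subseteq X_A$ is standard: start from any base of $A\cap B$ and repeatedly augment from a base of $A$ via \ref{prop:exchange}, staying inside $A$; the result is a base of $A$ containing $X_{A\cap B}$. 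The one genuine use of the local axioms is that intersections and unions of the chosen feasible sets stay feasible, which is precisely where $A\cup B\in\mathcal F^\vee$ is needed and where \ref{lup} and \ref{lip} enter; the local forest property \ref{lfp} is \emph{not} needed, so the lemma holds for all local poset greedoids, as stated. I would finish by remarking that this recovers the classical matroid rank submodularity when $\mathcal F^\vee=2^S$.
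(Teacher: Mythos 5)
Your proof is correct and follows essentially the same route as the paper's: take bases $X$ of $A$ and $Y$ of $B$, apply the local union and local intersection properties (legitimate because $X\cup Y\subseteq A\cup B\in\mathcal{F}^{\vee}$) to get $X\cup Y,X\cap Y\in\mathcal{F}$, and conclude from $r(A)+r(B)=|X|+|Y|=|X\cup Y|+|X\cap Y|\le r(A\cup B)+r(A\cap B)$. The only (cosmetic) difference is that the paper works with the unique bases $\Delta(A),\Delta(B)$ and proves the stronger fact $\Delta(A)\cap\Delta(B)=\Delta(A\cap B)$, whereas you only need $|X_A\cap X_B|\le r(A\cap B)$ --- and indeed your coherent choice $X_{A\cap B}\subseteq X_A,X_B$ is never actually used in the final chain of inequalities, so it could be omitted.
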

\begin{proof}
Let $X=\Delta(A)$ and $Y=\Delta(B)$. Then $X\cap Y\in\mathcal{F}$ by
the local intersection property. Furthermore, since for every feasible
set $Z\subseteq A\cap B$, $Z\cup X\in\mathcal{F}$ and $Z\cup
Y\in\mathcal{F}$ by the local union property, $Z\subseteq X\cap Y$
must hold by the definition of $\Delta(A)$ and $\Delta(B)$. Therefore
$X\cap Y=\Delta(A\cap B)$. Finally, since $X\cup Y\in\mathcal{F}$ is
also true by the local union property, we have
$r(A)+r(B)=|X|+|Y|=|X\cup Y|+|X\cap Y|\le r(A\cup B)+r(A\cap B)$ as
claimed.\qed
\end{proof}

Note that the above local supermodularity property also characterizes
local poset greedoids among all greedoids since it implies both
the local intersection and the local union properties if applied to
feasible sets. 

\begin{lemma}\label{lem:lpgsum}
If $G=(S,\mathcal{F})$ is a local poset greedoid,
$B\in\mathcal{F}^{\vee}$ is a subfeasible set and $\emptyset\ne
A\subseteq B$ then \[\sum_{x\in A}r(B-x)\le r(B-A)+(|A|-1)\cdot
r(B).\] 
\end{lemma}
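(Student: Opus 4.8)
The plan is to prove this by induction on $|A|$. When $|A|=1$, say $A=\{x\}$, both sides equal $r(B-x)$ (the second term on the right vanishing), so the inequality holds with equality. This disposes of the base case.

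For the inductive step, assume $|A|\ge 2$, pick an arbitrary element $x\in A$, and set $A'=A-x$; note $A'$ is a nonempty subset of $B$. Applying the inductive hypothesis to $A'$ gives
\[\sum_{y\in A'}r(B-y)\le r(B-A')+(|A|-2)\cdot r(B).\]
To conclude it then suffices to establish the single inequality
\[r(B-x)+r(B-A')\le r(B)+r(B-A),\]
since adding this to the previous display yields precisely $\sum_{y\in A}r(B-y)\le r(B-A)+(|A|-1)\cdot r(B)$.

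The key observation is that this last inequality is an instance of the local supermodularity property (Lemma~\ref{lem:lsp}) applied to the sets $C=B-x$ and $D=B-A'$. Indeed, since $x\notin A'$ we have $C\cup D=B$ and $C\cap D=B-(A'+x)=B-A$; and since $B\in\mathcal{F}^{\vee}$ by hypothesis, the condition $C\cup D\in\mathcal{F}^{\vee}$ required by Lemma~\ref{lem:lsp} is met. Hence $r(C)+r(D)\le r(C\cup D)+r(C\cap D)$ is exactly the inequality we wanted.

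I do not expect any serious obstacle here beyond correctly identifying the pair of sets to feed into Lemma~\ref{lem:lsp} and checking the elementary union and intersection identities above. The one point genuinely worth flagging is that the hypothesis of Lemma~\ref{lem:lsp} needs $B$ itself — not merely some feasible subset of it — to be subfeasible; this is precisely why the statement is phrased for subfeasible $B$ rather than for an arbitrary $B\subseteq S$, and it is the subfeasibility of $B$ that makes the inductive argument go through at every step.
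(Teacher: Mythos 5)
Your proof is correct and follows essentially the same route as the paper's: induction on $|A|$, peeling off one element and applying the local supermodularity property (Lemma~\ref{lem:lsp}) to the pair $B-x$ and $B-A'$, whose union is $B$ and whose intersection is $B-A$. Your explicit identification of the two sets fed into Lemma~\ref{lem:lsp} and the remark on why subfeasibility of $B$ is needed are just slightly more detailed versions of what the paper leaves implicit.
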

\begin{proof}
We proceed by induction on $|A|$. The claim is trivial for $|A|=1$,
so let $|A|\ge 2$ and $A'=A-z$ for an arbitrary $z\in A$. Then 
\begin{multline*}
\sum_{x\in A}r(B-x)=\sum_{x\in A'}r(B-x)+r(B-z)\le
r(B-A')+(|A|-2)\cdot r(B)+r(B-z)\le\\
r(B-A)+r(B)+(|A|-2)\cdot r(B)=r(B-A)+(|A|-1)\cdot r(B),
\end{multline*}
where the first inequality follows by induction and the second by
Lemma~\ref{lem:lsp}.\qed
\end{proof}

\begin{myproposition}\label{prop:lpgsubset}
Let $G=(S,\mathcal{F})$ be a local poset greedoid,
$P_{\text{shadow}}(G)$ its shadow polytope and 
$Q=\left\{x\in\mathbb{R}^S:x(U)\ge r(S)-r(S-U)\mbox{ for all
}U\subseteq S\right\}$. Then 
$P_{\text{shadow}}^{\uparrow}(G)\subseteq Q$.
\end{myproposition}
\begin{proof}
Let $B$ be a base of $G$, $sh_B$ its shadow vector and $U\subseteq
S$. Then using Lemma~\ref{lem:lpgsum} we have
\begin{multline*}
sh_B(U)=\sum_{x\in U\cap B}\left(|B|-r(B-x)\right)=|U\cap B|\cdot r(S)-
\sum_{x\in U\cap B}r(B-x)\ge\\
|U\cap B|\cdot r(S)-r(B-U)-(|U\cap B|-1)\cdot r(S)=r(S)-r(B-U)\ge 
r(S)-r(S-U).
\end{multline*}
Therefore all vertices of $P_{\text{shadow}}(G)$ are in $Q$ which
implies $P_{\text{shadow}}(G)\subseteq Q$. Consequently,
$P_{\text{shadow}}^{\uparrow}(G)\subseteq Q^{\uparrow}=Q$.\qed
\end{proof}

It can happen that $P_{\text{shadow}}^{\uparrow}(G)$ is a proper
subset of $Q$ in the above proposition as shown by the example already
seen in Section~\ref{sect:prelimgreedy}: let $S=\{a,b,c,d\}$ and
$\mathcal{F}=\left\{\emptyset, \{a\}, \{b\}, \{a,b\}, \{a,d\},
\{a,b,c\}, \{a,c,d\}\right\}$. Then $G=(S,\mathcal{F})$ is a local
poset greedoid, the shadow vectors of its two bases are $(2,2,1,0)$
and $(3,0,1,2)$ (if the elements are arranged in alphabetical order),
both of which fulfill $2x_a+x_b\ge 6$, hence this inequality is
fulfilled by every member of
$P_{\text{shadow}}^{\uparrow}(G)$. However, $(2,1,1,1)\in Q$ is easy
to check which shows that
$Q-P_{\text{shadow}}^{\uparrow}(G)\ne\emptyset$. 

The claim of Theorem~\ref{thm:main} is that $\subseteq$ can be
replaced by $=$ in Proposition~\ref{prop:lpgsubset} in case of local
forest greedoids. The proof will follow the argument of Edmonds'
original proof of Theorem~\ref{thm:edmonds}: the greedy algorithm will
be used to construct an optimum dual solution. However, it should be
noted that the construction we give below is not an extension of that
of Edmonds: even if applied to matroids it gives a different optimum
dual solution. In particular, Edmonds' construction (even if adapted
to prove Theorem~\ref{thm:matroidbaseuphull}, which can easily be
done) yields a chain of subsets of the ground set which is not true
for the construction given below.

\begin{theorem}\label{thm:lfgdualsol}
Let $G=(S,\mathcal{F})$ be a local forest greedoid, $|S|=n$, 
$c:S\rightarrow\mathbb{R}^+$ a non-negative valued weight function,
$w(A)=\sum\{c(P_x^A):x\in A\}$ for every $A\in\mathcal{F}$ and $B_m$ a
minimum base with respect to $w$. Then there exist the subsets
$U_1,U_2,\ldots,U_n\subseteq S$ and corresponding values 
$y(U_1),y(U_2),\ldots,y(U_n)$ such that $y(U_i)\ge 0$ for all $1\le
i\le n$, $\sum\{y(U_i):x\in U_i\}=c(x)$ holds for every $x\in S$ and
$\sum_{i=1}^n\left(r(S)-r(S-U_i)\right)\cdot y(U_i)=w(B_m)$. 
\end{theorem}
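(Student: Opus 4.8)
The plan is to imitate Edmonds' proof of the matroid polytope theorem: run the greedy algorithm and read an optimal dual solution off its execution. By Theorem~\ref{thm:boyd} we may assume $B_m$ is the base produced by the greedy algorithm applied to $w$, and by Lemma~\ref{lem:objfn} we have $w(B_m)=\sum_{s\in S}c(s)\,sh_{B_m}(s)$, so by Proposition~\ref{prop:lpgsubset} the vector $sh_{B_m}$ is a point of $Q=\{z:z(U)\ge r(S)-r(S-U)\text{ for all }U\subseteq S\}$ of objective value $w(B_m)$. Hence the whole statement amounts to exhibiting a feasible solution of the LP dual to $\min\{\sum_{s}c(s)z(s):z\in Q\}$ whose value equals $w(B_m)$; weak duality then additionally certifies that $sh_{B_m}$ is optimal, which is exactly what is needed afterwards for Theorem~\ref{thm:main}.

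First I would run the greedy algorithm, obtaining $\emptyset=A_0\subset A_1\subset\dots\subset A_k=B_m$ with $A_i=A_{i-1}+x_i$. A preliminary observation is that augmenting a feasible set leaves the $\mathcal F$-paths of its current elements unchanged, so the marginal gain at step $i$ equals $t_i:=w(A_i)-w(A_{i-1})=c(P_{x_i}^{A_i})$; combining this with the fact (Theorem~\ref{thm:lfgpath}) that prefixes of the feasible ordering of a path are again paths, together with $c\ge 0$, one gets $t_1\le t_2\le\dots\le t_k$ — the greedoid analogue of Dijkstra dequeuing vertices in order of distance. I would then extend this to a \emph{potential} $\phi$ on all of $S$: for $s$ lying in some feasible set put $\phi(s)=\min\{c(P_s^Z):Z\in\mathcal F,\ s\in Z\}$ (so $\phi(x_i)=t_i$), and fix a parent $p(s)\in S$ — the penultimate element of the feasible ordering of $P_s^Z$ in a $\phi$-minimizing $Z$ — together with a virtual root at level $0$, chosen so that $\phi(s)=\phi(p(s))+c(s)$. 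The few elements lying in no feasible set are disposed of trivially: assign such an $s$ the singleton $\{s\}$ with value $c(s)$, which contributes nothing since $r(S)-r(S-\{s\})=0$.

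Now define the dual. Order $S=\{e_1,\dots,e_n\}$ so that $\phi(e_1)\le\dots\le\phi(e_n)$, put $y(U_m)=\phi(e_m)-\phi(e_{m-1})\ge 0$ (with $\phi(e_0):=0$), and let $U_m=\{s\in S:\phi(p(s))<\phi(e_m)\le\phi(s)\}$, the set of elements ``crossing level $\phi(e_m)$''. Since $p(s)$ and $s$ both occur in the list, the indices $m$ with $s\in U_m$ form a contiguous block over which the $y$-values telescope to $\phi(s)-\phi(p(s))=c(s)$, giving $\sum_{i:s\in U_i}y(U_i)=c(s)$; ties, repetitions and empty sets cause no harm, and padding keeps the count at exactly $n$. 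It remains to evaluate $\sum_m y(U_m)\bigl(r(S)-r(S-U_m)\bigr)$. Writing $A_{<\theta}$ for the feasible set the greedy algorithm has built just before its marginal cost reaches $\theta$, the crucial claim is that $r(S-U_{\phi(e_m)})=|A_{<\phi(e_m)}|$, i.e.\ $A_{<\phi(e_m)}$ is a base of $S-U_{\phi(e_m)}$. Granting this, $r(S)-r(S-U_m)=|B_m|-|A_{<\phi(e_m)}|$, which equals $\sum_{x\in B_m\cap U_m}sh_{B_m}(x)$ because $B_m\cap U_m$ is a ``cut'' of the tree $B_m$ whose below-parts partition $B_m\setminus A_{<\phi(e_m)}$ (monotonicity of path costs along a path, since $c\ge 0$, makes each path cross the cut exactly once). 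Swapping the order of summation then yields $\sum_m y(U_m)(r(S)-r(S-U_m))=\sum_{x\in B_m}sh_{B_m}(x)\sum_{m:x\in U_m}y(U_m)=\sum_{x\in B_m}c(x)\,sh_{B_m}(x)=w(B_m)$ by Lemma~\ref{lem:objfn}, as required.

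The hard part is the crucial claim, and this is precisely where the local forest property is indispensable (Proposition~\ref{prop:lpgsubset} is already not tight for merely local poset greedoids, as the example following it shows). The inclusion $A_{<\theta}\subseteq S-U_\theta$, hence $r(S-U_\theta)\ge|A_{<\theta}|$, is immediate. For the reverse inequality one takes a feasible $Z\subseteq S-U_\theta$ with $|Z|>|A_{<\theta}|$, augments $A_{<\theta}$ from $Z$ to a continuation $s\in\Gamma(A_{<\theta})$ with $s\notin U_\theta$, and must reach a contradiction: the greedy rule gives $c(P_s^{A_{<\theta}+s})\ge\theta$, and one has to upgrade this to $\phi(s)\ge\theta$ together with $\phi(p(s))<\theta$, i.e.\ $s\in U_\theta$ after all. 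Making this work needs a Dijkstra-type correctness statement for local forest greedoids — that the greedy potential $\phi$ genuinely is the global shortest-path cost and has optimal substructure, so that no feasible subset of $S-U_\theta$ can reach past the $\theta$-frontier — and the proof of this rests on the path-prefix structure of Theorem~\ref{thm:lfgpath} together with the local union and local intersection properties. I expect this correctness lemma, and the bookkeeping that identifies $B_m\cap U_m$ with a tree cut in the general (non-branching) setting, to be the technical bulk of the argument; the rest is the routine telescoping and summation-swapping sketched above.
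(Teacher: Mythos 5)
Your overall architecture --- run the greedy algorithm and read off a telescoping dual solution --- is indeed the paper's, but your concrete construction is different and, as written, the argument has a genuine hole exactly where you say you expect ``the technical bulk'' to lie. You define the sets $U_m$ as level sets of a global potential $\phi(s)=\min\{c(P_s^Z):Z\in\mathcal F,\ s\in Z\}$ together with a parent map $p$, and everything then rests on two unproven claims: (a) the optimal-substructure / Dijkstra-correctness statement that $\phi(s)=\phi(p(s))+c(s)$ and that the greedy marginal costs realize $\phi$, i.e.\ $\phi(x_i)=t_i$; and (b) the rank identity $r(S-U_\theta)=|A_{<\theta}|$. Neither is routine, and your sketch of (b) does not close even granting (a): the greedy rule gives $c(P_s^{A_{<\theta}+s})\ge\theta$, but by definition $\phi(s)\le c(P_s^{A_{<\theta}+s})$, so the inequality points the wrong way for concluding $\phi(s)\ge\theta$; what you actually need is global path-optimality of the greedy prefix paths, a statement strictly stronger than Theorem~\ref{thm:boyd} (which bounds the sum $w(B)$, not each individual path cost). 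You also assert without proof the identity $r(S)-r(S-U_m)=\sum_{x\in B_m\cap U_m}sh_{B_m}(x)$ via a ``tree cut'' picture whose meaning in a general local forest greedoid would itself need the path-prefix machinery of Theorem~\ref{thm:lfgpath} to be made precise.

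The paper's construction sidesteps all of this. It takes $U_i=\Gamma(B_i)$, the continuation set of the $i$-th greedy prefix, with $y(U_i)=c(P_i)-c(P_{i-1})$ the increments of the greedy path costs, plus singletons carrying the residual weight of the remaining elements. Then $r(S-U_i)=i-1$ is a one-line augmentation argument ($B_i$ is a base of $S-U_i$, since any larger feasible subset of $S-U_i$ would yield a continuation of $B_i$, which by definition lies in $U_i$); dual feasibility follows from purely local structural claims and comparisons against the greedy choice (the paper's Claims 1--4), never invoking global shortest-path optimality; and the objective value is a direct Abel summation $\sum_{i=1}^r(r-i+1)(c(P_i)-c(P_{i-1}))=\sum_{i=1}^r c(P_i)=w(B)$, with Theorem~\ref{thm:boyd} used only at the very end to identify $w(B)$ with $w(B_m)$ --- no shadow-cut identity is needed. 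To salvage your potential-based variant you would first have to prove the Dijkstra-correctness lemma for local forest greedoids, which is where essentially all of the difficulty of the theorem is concentrated; as it stands, the proposal is a plan rather than a proof.
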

\begin{proof}
Assume that a running of the greedy algorithm gives the base
$B=\{s_1,s_2,\ldots,s_r\}$ choosing the elements in this order and let
$B_1=\emptyset$ and $B_i=\{s_1,s_2,\ldots,s_{i-1}\}$ for every $2\le
i\le r$. Let $S-B=\{s_{r+1},\ldots,s_n\}$ with the elements ordered
arbitrarily. Finally, denote $P_0=\emptyset$ and $P_i=P_{s_i}^{B}$ for
every $1\le i\le r$.  Then let
\[\begin{array}{r@{}c@{}l@{\quad}l}
U_i&=&\left\{\begin{array}{ll}
\Gamma(B_i),&\text{if }1\le i\le r,\\
\{s_i\},&\text{if }r<i\le n\\
\end{array}\right.&\text{and}\\[3\medskipamount]
y(U_i)&=&\left\{\begin{array}{ll}
c(P_i)-c(P_{i-1}),&\text{if }1\le i\le r,\\
c(s_i)-\sum\{y(U_j):j\le r, s_i\in U_j\},&\text{if }r<i\le n.\\
\end{array}\right.\end{array}\]

We prove that the above choice of $U_i$ and $y(U_i)$ fulfills all
requirements of the theorem through a series of claims. 

\newcommand{\claimqed}{\hfill$\Diamond$}

\begin{myclaim}\label{cl:1}
Let $x\in\bigcup_{i=1}^rU_i$ and $j=\min\{i:x\in U_i\}$.
Then $P_x^{B_j+x}=P_{j-1}+x$.
\end{myclaim}
\begin{proof}
If $j=1$ then $P_x^{B_j+x}=\{x\}$ and thus the claim is obvious, so
assume $j\ge2$ and hence $|P_x^{B_j+x}|\ge2$.  Since $x\in
U_j-U_{j-1}$, we have $B_j+x\in\mathcal{F}$ and
$B_{j-1}+x\notin\mathcal{F}$.  Since
$B_{j-1},P_x^{B_j+x},B_j+x\in\mathcal{F}$, $B_{j-1}\cup
P_x^{B_j+x}\in\mathcal{F}$ follows from the local union property. This
implies $s_{j-1}\in P_x^{B_j+x}$ by $B_{j-1}+x\notin\mathcal{F}$.

Since $P_x^{B_j+x}-x\in\mathcal{F}$ and $s_{j-1}\in
P_x^{B_j+x}-x\subseteq B_j$, $P_{j-1}\subseteq P_x^{B_j+x}-x$ and
therefore $P_{j-1}+x\subseteq P_x^{B_j+x}$ follows from the definition
of a path. The second to last element in the unique ordering of
$P_x^{B_j+x}$ is obviously $s_{j-1}$ otherwise $s_{j-1}\in
P_t\subseteq B_{t+1}$ would follow from Theorem~\ref{thm:lfgpath} for
some $t<j-1$, a contradiction. Therefore $P_x^{B_j+x}=P_{j-1}+x$ as
claimed.\claimqed
\end{proof}

\begin{myclaim}\label{cl:2}
Let $x\in\bigcup_{i=1}^rU_i$, $j=\min\{i:x\in U_i\}$, $k=\max\{i:x\in
U_i,i\le r\}$. Then $x\in U_i$ holds for every $j\le i\le k$ and
$\sum\{y(U_i):x\in U_i,i\le r\}=c(P_k)-c(P_{j-1})$.
\end{myclaim}
\begin{proof}
From $x\in U_j\cap U_k$ we have $B_j+x\in\mathcal{F}$ and
$B_k+x\in\mathcal{F}$ which, by the local union property, imply
$B_i+x\in\mathcal{F}$ and therefore $x\in U_i$ for every $j\le i\le
k$ as claimed. Consequently, 
\[\sum\{y(U_i):x\in U_i,i\le
r\}=\sum_{i=j}^k\left(c(P_i)-c(P_{i-1})\right)=c(P_k)-c(P_{j-1}).
\quad\Diamond\]
\end{proof}

\begin{myclaim}\label{cl:3}
$c(P_1)\le c(P_2)\le \ldots\le c(P_r)$.
\end{myclaim}
\begin{proof}
If $s_i\in U_{i-1}$ for some $2\le i\le r$ then $c(P_{i-1})\le c(P_i)$
is implied by the fact that the greedy algorithm could have chosen
$s_i$ instead of $s_{i-1}$. If, on the other hand, $s_i\notin U_{i-1}$
then $P_i=P_{i-1}+s_i$ follows from Claims~\ref{cl:1} and
\ref{cl:2}. Hence $c(P_i)=c(P_{i-1})+c(s_i)$ which proves the
claim.\claimqed 
\end{proof}

\begin{myclaim}
$y(U_i)\ge 0$ for all $1\le i\le n$
and $\sum\{y(U_i):x\in U_i\}=c(x)$ for all $x\in S$. 
\end{myclaim}
\begin{proof}
Let first $x=s_t$ for some $1\le t\le r$. Then
$y(U_t)\ge0$ is immediate from Claim~\ref{cl:3}. Define $j$ and $k$ as
in Claim~\ref{cl:3}. Then $k=t$ is obvious and Claim~\ref{cl:1} gives 
$P_t=P_{j-1}+s_t$. Therefore from Claim~\ref{cl:2} we have 
$\sum\{y(U_i):s_i\in U_i\}=c(P_t)-c(P_{j-1})=c(s_t)$ as claimed.

Now let $x=s_t$ for some $r<t\le n$. If $s_t\notin\bigcup_{i=1}^rU_i$
then $\sum\{y(U_i):s_t\in U_i\}=y(U_t)=c(s_t)\ge0$ is clear. 
If, on the other hand, $s_t\in\bigcup_{i=1}^rU_i$ then again define
$j$ and $k$ as in Claim~\ref{cl:3}. Since the greedy algorithm could
have chosen $x$ instead of $s_k$ by $x=s_t\in U_k$, 
we have $c(P_x^{B_k+x})\ge c(P_k)$. Furthermore, $B_j+x\subseteq
B_k+x$ implies $P_x^{B_j+x}=P_x^{B_k+x}$. These, together with 
Claims~\ref{cl:1} and \ref{cl:2} imply 
\[c(x)=c(P_x^{B_k+x})-c(P_{j-1})\ge c(P_k)-c(P_{j-1})=
\sum\{y(U_i):x\in U_i,i\le r\},\]
hence we have the claim by the definitions of $U_i$ and
$y(U_i)$.\claimqed  
\end{proof}

\begin{myclaim}\label{cl:5}
\[r(S-U_i)=\left\{\begin{array}{ll}
i-1,&\text{if }1\le i\le r,\\
r(S),&\text{if }r<i\le n\\
\end{array}\right.\]
\end{myclaim}
\begin{proof}
If $r<i$ then $B\subseteq S-U_i$ so $r(S-U_i)=r(S)$ is
obvious. For $1\le i\le r$ we show that $B_i$ is a base of $S-U_i$
which will settle the claim by $|B_i|=i-1$. 
$B_i\subseteq S-U_i$ and $B_i\in\mathcal{F}$ are obvious. Furthermore,
if $|B_i|<|X|$ and $X\in\mathcal{F}$ then $B_i+x\in\mathcal{F}$ for
some $x\in X-B_i$ by \ref{prop:exchange} and hence $x\in U_i$, which
proves that $B_i$ is indeed a base of $S-U_i$.\claimqed 
\end{proof}

Finally, it remains to show that 
$\sum_{i=1}^n\left(r(S)-r(S-U_i)\right)\cdot y(U_i)=w(B_m)$
holds. Using Claim~\ref{cl:5} we get 
\begin{multline*}
\sum_{i=1}^n\left(r(S)-r(S-U_i)\right)\cdot y(U_i)=
\sum_{i=1}^r\left(r(S)-i+1\right)\cdot y(U_i)=\\
\sum_{i=1}^r\left(r-i+1\right)\cdot\left(c(P_i)-c(P_{i-1})\right)=
\sum_{i=1}^r c(P_i)=w(B)=w(B_m),
\end{multline*}
where the last equation follows from Theorem~\ref{thm:boyd}.\qed 
\end{proof}

Now we are ready for the

\begin{theopargself}
\begin{proof}[of Theorem~\ref{thm:main}.]
Let $P=P_{\text{shadow}}^{\uparrow}(G)$ for short. 
By Proposition~\ref{prop:lpgsubset} we have
$P\subseteq Q$, where
$Q=\left\{x\in\mathbb{R}^S:x(U)\ge r(S)-r(S-U)\mbox{ for all
}U\subseteq S\right\}$. To show equality it suffices to prove that
$\min\{cx:x\in P\}=\min\{cx:x\in Q\}$
holds for every $c\in\mathbb{R}^S$, $c\ge0$.
(Indeed, since $P^{\uparrow}=P$ holds, $P$ can be written in the form
$P=\{x:Ax\ge b\}$ for some matrix $A\ge0$. If a $z\in Q-P$ existed
then $z$ would violate a constraint $cx\ge\delta$ of $Ax\ge b$ and
hence $\min\{cx:x\in P\}>\min\{cx:x\in Q\}$ would follow.)

So let a $c\in\mathbb{R}^S$, $c\ge0$ be fixed, let
$w(A)=\sum\{c(P_x^A):x\in A\}$ for every $A\in\mathcal{F}$ and $B_m$ a
minimum base with respect to $w$. Using Lemma~\ref{lem:objfn} and
since $\min\{cx:x\in P_{\text{shadow}}(G)\}$ is attained on a vertex
of $P_{\text{shadow}}(G)$ and $P_{\text{shadow}}(G)\subseteq
P\subseteq Q$, we get
\begin{multline}\label{eq:main}
w(B_m)=\min\left\{\sum_{s\in B}c(P_s^B):B\in\mathcal{B}\right\}=
\min\left\{\sum_{s\in B}c(s)\cdot sh_B(s):B\in\mathcal{B}\right\}=\\
\min\{cx:x\in P_{\text{shadow}}(G)\}\ge\min\{cx:x\in
P\}\ge\min\{cx:x\in Q\}. 
\end{multline}

From the duality theorem of linear programming we get 
\begin{multline*}\min\{cx:x\in Q\}=
\max\Big\{\sum y(U)(r(S)-r(S-U)):\\
\sum\{y(U):s\in U\}=c(s)
\text{ for all }s\in S,y(U)\ge0\text{ for all }U\subseteq S\Big\}.
\end{multline*}

Theorem~\ref{thm:lfgdualsol} implies that this maximum is at least
$w(B_m)$, which in turn implies that every inequality in
(\ref{eq:main}) is fulfilled with equation and hence concludes the
proof.\qed
\end{proof}
\end{theopargself}

\begin{mycorollary}\label{cor:intsol}
If $G=(S,\mathcal{F})$ is a local forest greedoid and
$c:S\rightarrow\mathbb{Z}^+$ is a non-negative integer valued weight
function then the linear programming problem 
\[\min\big\{cx:x(U)\ge r(S)-r(S-U)\mbox{ for all
}U\subseteq S\big\}\]
and its dual
\begin{multline*}
\max\Big\{\sum y(U)(r(S)-r(S-U)):\sum\{y(U):s\in U\}=c(s)
\text{ for all }s\in S,\\
y(U)\ge0\text{ for all }U\subseteq S\Big\}
\end{multline*}
have integer optimum solutions.
\end{mycorollary}
\begin{proof}
It follows from the proof Theorem~\ref{thm:main} that the minimum of
the primal problem is attained on the shadow vector of a base of $G$
which is obviously integer. Furthermore, the construction of the proof
of Theorem~\ref{thm:lfgdualsol} yields an integer optimum solution of
the dual problem if $c$ is integer.\qed
\end{proof}

\begin{mycorollary}
If $G=(S,\mathcal{F})$ is a local forest greedoid then the system 
\[x(U)\ge r(S)-r(S-U)\mbox{ for all
}U\subseteq S\]
is totally dual integral.
\end{mycorollary}
\begin{proof}
Immediately from Corollary~\ref{cor:intsol} after observing that the
minimum of the primal program clearly does not exist if $c$ contains
a negative component.\qed
\end{proof}

We remark that no similar description of $P_{\text{shadow}}(G)$ is to
be hoped for, not even for branching greedoids. Indeed, it follows
from Lemma~\ref{lem:objfn} that maximizing a linear objective function
over $P_{\text{shadow}}(G)$ translates to maximizing $\sum_{e\in
  E(H)}c(P_e)$ which is, as it was pointed out in \cite[Chapter
  XI.]{KLS}, NP-hard as it contains the Hamilton path problem.
Therefore the existence of such a description of
$P_{\text{shadow}}(G)$ would imply that, for example, the Hamilton
path problem is in co-NP, which is highly unlikely.

\section{An Application: Reliability of Networks via Game
  Theory}\label{sect:apply} 

The problem of measuring the robustness or reliability of a graph
arises in many applications. The most widely applied reliability
metrics are obviously the connectivity based ones, however, these are
unsuitable in many cases -- for example because in many
applications the network is almost completely functional if removing
some nodes or links results in the loss of only a small number of
nodes that are in some sense insignificant or peripheral.  

Applying game-theoretical tools for measuring the reliability of a
graph has become very common. The basic idea is very natural: define a
game between two virtual players, the Attacker and the Defender, such
that the rules of the game capture the circumstances under which
reliability is to be measured. Then analyzing the game might give rise
to an appropriate security metric: the better the Attacker can do in
the game, the lower the level of reliability is. This kind of analysis
can give rise to new graph reliability metrics and in some cases it
can shed a new light on some well-known ones. 

To illustrate this, consider the following \emph{Spanning Tree Game}:
a connected, undirected graph $G$, a positive valued damage function
$d:E(G)\rightarrow\mathbb{R}^+$ and a cost function
$c:E(G)\rightarrow\mathbb{R}$ are given. For each edge, $d(e)$
represents the ``damage'' caused by the loss of $e$ (or in other
words, the ``importance'' of $e$) and $c(e)$ represents the cost of
attacking $e$. The Attacker chooses (or ``attacks and destroys'') an
edge $e$ of $G$ and the Defender (without knowing the Attacker's
choice) chooses a spanning tree $T$ of $G$ (that she intends to use as
some kind of ``communication infrastructure'').  Regardless of the
Defender's choice, the Attacker has to pay the cost of attack $c(e)$
to the Defender. There is no further payoff if $e\notin T$. If, on the
other hand, $e\in T$ then the Defender pays the Attacker the damage
value $d(e)$. Since this game is a two-player, zero-sum game, it has a
unique Nash-equilibrium payoff (or, in simpler terms, game value) $V$
by Neumann's classic Minimax Theorem. Since $V$ is the highest
expected gain the Attacker can guarantee himself by an appropriately
chosen mixed strategy (that is, probability distribution on the set of
edges), it makes sense to say that $\frac1V$ is a valid reliability
metric.

After some preliminary results on some special cases in the literature
(see \cite{Sz2} for the details), the Spanning Tree Game was solved in
the above defined general form in \cite{Sz}. In fact, it was
considered there in a more general, matroidal setting: the
\emph{Matroid Base Game} was defined analogously to the Spanning Tree
Game with the only difference being that the Attacker chooses an
element of the ground set of a matroid $M=(S,\mathcal{F})$ and the
Defender chooses a base $B$ of $M$. Then the following result was
proved.

\begin{theorem}[\cite{Sz}]\label{thm:mbgame}
For every input of the Matroid Base Game the game value is 
\[\max_{\emptyset\ne U\subseteq S}
\frac{r(S)-r(S-U)-q(U)}{p(U)},\]
where $p(s)=\frac1{d(s)}$ and $q(s)=\frac{c(s)}{d(s)}$ for all 
$s\in S$. Furthermore, if $M$ is given by an independence testing
oracle then there exists a strongly polynomial algorithm that computes  
the game value of the Matroid Base Game and an optimum mixed
strategy for both players. 
\end{theorem}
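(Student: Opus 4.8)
The plan is to reduce the Matroid Base Game to a minimax computation and then to recognize the inner optimization as a linear program whose value is given by Theorem~\ref{thm:matroidbaseuphull}. First I would set up the payoff: if the Attacker plays element $s$ with probability $p_s$ (so $\sum_s p_s = 1$, $p\ge 0$) and the Defender plays base $B$, the Attacker's expected gain against $B$ is $\sum_{s\in S} p_s c(s) + \sum_{s\in B} p_s d(s)$, i.e.\ $c\cdot p' $ type terms plus a term that depends on $B$ only through $\sum_{s\in B} p_s d(s)$. By the Minimax Theorem the game value $V$ equals $\max_{p\in\Delta(S)} \min_{B\in\mathcal{B}} \bigl(\sum_{s} p_s c(s) + \sum_{s\in B} p_s d(s)\bigr)$, so the Defender's best response to a fixed mixed strategy $p$ minimizes a nonnegative linear function $x \mapsto (p\cdot d)^\top x$ over the incidence vectors of bases, where $(p\cdot d)$ is the vector with entries $p_s d(s)\ge 0$.

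Next I would invoke Theorem~\ref{thm:matroidbaseuphull}: since the coefficient vector $p\cdot d$ is nonnegative, minimizing $(p\cdot d)^\top x$ over $P_{\text{base}}(M)$ equals minimizing it over the up-hull $P_{\text{base}}^{\uparrow}(M) = \{x: x(U)\ge r(S)-r(S-U)\text{ for all }U\subseteq S\}$, and by LP duality this equals $\max\{\sum_U y(U)(r(S)-r(S-U)) : \sum_{U\ni s} y(U) = p_s d(s)\ \forall s,\ y\ge 0\}$. Substituting back, $V$ is the optimum of a single linear program in the variables $p$ and $y$: maximize $\sum_s p_s c(s) + \sum_U y(U)(r(S)-r(S-U))$ subject to $\sum_s p_s = 1$, $\sum_{U\ni s} y(U) = p_s d(s)$ for all $s$, and $p,y\ge 0$. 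The plan is then to simplify this LP. Using the substitution $p_s = y_s$-contributions is not quite right; rather, fixing the support structure, one observes that an optimal $y$ will be supported on a single set $U$ (since the objective is a maximum of per-set rates), which suggests guessing that the optimum has $y$ concentrated on one set $U^*$ with $y(U^*) = t$ and $p_s = 0$ for $s\notin U^*$, $p_s d(s) = y(U^*)$-compatible values, leading after the normalization $\sum p_s = 1$ to the ratio $\frac{r(S)-r(S-U) - q(U)}{p(U)}$ with $p(s)=1/d(s)$, $q(s)=c(s)/d(s)$.

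More carefully, I would argue as follows. Rescale by setting $z_s = p_s d(s) \ge 0$, so the normalization becomes $\sum_s z_s/d(s) = 1$, i.e.\ $p(S\cap\operatorname{supp} z)$-weighted: $\sum_s z_s p(s) = 1$. The objective becomes $\sum_s z_s q(s) + \sum_U y(U)(r(S)-r(S-U))$ with $\sum_{U\ni s} y(U) = z_s$. Eliminating $z_s$, the LP reads: maximize $\sum_U y(U)\bigl(r(S)-r(S-U) + q(U)\bigr)$ over $y\ge 0$ with $\sum_U y(U) \cdot p(U) = 1$ (after substituting $z_s = \sum_{U\ni s} y(U)$ and $\sum_s z_s p(s) = \sum_U y(U) p(U)$). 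This is a fractional/ratio program: its value is $\max_{U: p(U)>0} \frac{r(S)-r(S-U)+q(U)}{p(U)}$ — wait, the sign of $q$ must be tracked; since the Attacker pays $c(s)$ \emph{to} the Defender, $c$ enters with a minus in the Attacker's gain, so the term is $-q(U)$, giving exactly $\max_{\emptyset\ne U\subseteq S} \frac{r(S)-r(S-U)-q(U)}{p(U)}$, which is the claimed formula. The hard part will be the bookkeeping in this last reduction: justifying that the optimum of the LP $\max\{\sum_U a_U y(U) : \sum_U p(U) y(U) = 1,\ y\ge 0\}$ is $\max_U a_U/p(U)$ (a standard LP-on-a-simplex fact, but one must handle $p(U)=0$, i.e.\ $U$ disjoint from the support where $d$ is finite, and the case $r(S)-r(S-U)-q(U)\le 0$ for all $U$, where the value should be $0$ and the Attacker plays nothing meaningful) and checking that the sign conventions from the game's payoff match. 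For the algorithmic claim, I would note that the same LP has polynomially many effective constraints when combined with the matroid rank oracle via the ellipsoid method (separation over $Q$ reduces to matroid rank minimization, which is submodular minimization and strongly polynomial), yielding a strongly polynomial algorithm for $V$ and, by standard LP-to-strategy extraction, for optimal mixed strategies of both players.
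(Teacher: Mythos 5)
The paper does not reprove Theorem~\ref{thm:mbgame} here (it is cited from \cite{Sz}); the closest in-paper argument is the proof of the generalization, Theorem~\ref{thm:lfggame}, which works entirely from the Defender's side: a mixed strategy of the Defender is a point $x$ of $P_{\text{base}}(M)$, the game value is $\min\{\mu:\mu p+q\in P_{\text{base}}^{\uparrow}(M)\}$, and the formula drops out by substituting the inequality description of the up-hull and rearranging --- no LP duality beyond the Minimax Theorem. Your route is genuinely different: you work from the Attacker's side, dualize the Defender's best-response LP over $P_{\text{base}}^{\uparrow}(M)$, merge the two maximizations into one LP in $(p,y)$, and read off the value as a maximum ratio over vertices of a simplex-like polyhedron. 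This is correct for the game-value formula: the sign of $c$ (the Attacker \emph{pays} $c(s)$, so it enters with a minus) is initially written wrongly but you catch it; strong duality applies since $Q=P_{\text{base}}^{\uparrow}(M)\subseteq\mathbb{R}^S_{\ge 0}$ makes the inner primal feasible and bounded; and the ratio LP $\max\{\sum_U a_U y(U):\sum_U p(U)y(U)=1,\ y\ge0\}$ does equal $\max_{\emptyset\ne U}a_U/p(U)$ because $d>0$ forces $p(U)>0$ for $U\ne\emptyset$ and the only recession direction is $y(\emptyset)$, on which the objective vanishes. One remark: your worry about the case where all ratios are nonpositive is unfounded --- the Attacker is forced to attack some element, so the game value genuinely can be negative and no clamping at $0$ is needed; the formula as stated already covers this. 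Compared with the paper's argument, yours costs an extra duality step and the vertex analysis, but it has the side benefit of directly exhibiting the optimal attack strategy (uniform in the $p$-weights on an optimizing set $U^*$).

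There is a genuine gap in the algorithmic half of the statement. Running the ellipsoid method with a (strongly polynomial) separation oracle for $Q$ yields a \emph{polynomial}, not a \emph{strongly polynomial}, algorithm: the number of ellipsoid iterations depends on the bit sizes of $c$ and $d$, and invoking submodular-function-minimization as the separation routine does not repair this. The strongly polynomial claim in \cite{Sz} rests on a combinatorial algorithm (in the spirit of Cunningham's strength computation \cite{Cun}), not on generic LP machinery; your sketch would need either that route or an additional Frank--Tardos-type preprocessing argument, which you have not supplied.
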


The running time of the algorithm given in \cite{Sz} was later
substantially improved in \cite{BB}.

If specialized to the Spanning Tree Game and to the $c\equiv0$ case,
the above theorem implies that the game value is the reciprocal of a
well-known graph reliability metric: the \emph{strength} of a graph is
defined as \[\sigma_p(G)=\min\left\{\frac{p(U)}{\mathop{\rm
comp}(G-U)-1}: U\subseteq E(G),\mathop{\rm comp}(G-U)>1\right\},\]
where ${\mathop{\rm comp}(G-U)}$ is the number of components of the
graph obtained from $G$ by deleting $U$ and
$p:E(G)\rightarrow\mathbb{R}^+$ is a weight function. This notion was
defined in the weighted case and its computability in strongly
polynomial time was proved in \cite{Cun}.

While the Matroid Base Game has further relevant applications beyond
the Spanning Tree Game (see \cite{Sz}), there are other types of games
of a similar nature which do not fit into this framework. The
following \emph{Rooted Spanning Tree Game} was considered in
\cite{BLS2}: a (mixed) graph $H$ with a ``headquarters'' node $r$ is
given such that every node is reachable from $r$.  (The role of $r$
can be that all other nodes need to communicate with $r$ only, for
example to transmit some collected data to $r$.)  Furthermore assume
that a cost function $c:E(H)\rightarrow\mathbb{R}$ is also
given. Again, the Attacker chooses an edge $e$, the Defender
chooses a spanning tree $T$ and the cost of attack $c(e)$ is
payed by the Attacker to the Defender in all cases and there is no
further payoff if $e\notin T$. However, if $e\in T$ then the payoff
from the Defender to the Attacker is the number of nodes that become
unreachable from $r$ in $T$ after removing $e$.

Since this number is nothing but the shadow $sh_T(e)$ in case of the
branching greedoid, the definition of the \emph{Local Forest Greedoid
  Base Game} presents itself: given a local forest greedoid
$G=(S,\mathcal{F})$ and weight functions $d,c\in\mathbb{R}^S$ with
$d>0$, the Attacker chooses an element $s\in S$, the Defender chooses
a base $B$ of $G$ and then the payoff from the Defender to the
Attacker is $d(s)\cdot sh_B(s)-c(s)$. Clearly, this game is a direct
generalization of the Matroid Base Game mentioned above. Then, using
Theorem~\ref{thm:main} and following the proof of \cite[Theorem 5]{Sz}
we can prove the following.

\begin{theorem}\label{thm:lfggame}
For every input of the Local Forest Greedoid Base Game the game value
is 
\[\max_{\emptyset\ne U\subseteq S}
\frac{r(S)-r(S-U)-q(U)}{p(U)},\]
where $p(s)=\frac1{d(s)}$ and $q(s)=\frac{c(s)}{d(s)}$ for all 
$s\in S$.
\end{theorem}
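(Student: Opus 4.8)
The plan is to follow the proof of \cite[Theorem 5]{Sz} essentially verbatim, substituting Theorem~\ref{thm:main} for the matroid polytope description that was used there (Theorem~\ref{thm:matroidbaseuphull}). Recall that the Local Forest Greedoid Base Game is a finite two-player zero-sum game, so by Neumann's Minimax Theorem its value $V$ exists and is attained by a pair of optimal mixed strategies. A mixed strategy for the Attacker is a probability distribution on $S$, and a mixed strategy for the Defender is a probability distribution on the bases $\mathcal{B}$. The key observation is that if the Defender plays bases $B$ with probabilities $\lambda_B$, then the expected payoff when the Attacker plays $s$ is $d(s)\cdot\big(\sum_B\lambda_B\, sh_B(s)\big)-c(s)$, and the vector $z=\sum_B\lambda_B\, sh_B$ is an arbitrary point of the shadow polytope $P_{\text{shadow}}(G)$. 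Dividing through by $d(s)$, the Attacker is indifferent between wanting to make $z(s)-q(s)$ small (weighted by $p(s)$) — so the Defender's goal is to pick $z\in P_{\text{shadow}}(G)$ minimizing $\max_{s}\, p(s)^{-1}\cdots$; I will set this up carefully as an LP.

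The concrete steps: First I would write the game value as a min-max LP over mixed strategies and observe that, from the Defender's side, the relevant quantity is $V=\min_{z\in P_{\text{shadow}}(G)}\ \max_{\emptyset\ne U\subseteq S}\ \big(q(U)-z(U)\big)\big/\,\text{(something)}$ — more precisely I would argue, as in \cite{Sz}, that an Attacker strategy is equivalent to a choice of weights and that the value equals $\max_{\emptyset\ne U\subseteq S}\big(\min_{z\in P_{\text{shadow}}(G)} z(U) - q(U)\big)/p(U)$ after the right manipulations. Second, and this is the crucial point, I replace $\min_{z\in P_{\text{shadow}}(G)} z(U)$ by $\min_{z\in P_{\text{shadow}}^{\uparrow}(G)} z(U)$: this is legitimate because $U\subseteq S$ and the objective $z\mapsto z(U)=\sum_{s\in U}z(s)$ has all-nonnegative coefficients, so moving up in the up-hull cannot decrease it; hence the minimum over the polytope equals the minimum over the up-hull. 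Third, I invoke Theorem~\ref{thm:main}, which gives $P_{\text{shadow}}^{\uparrow}(G)=\{x:x(W)\ge r(S)-r(S-W)\ \forall W\subseteq S\}$, and compute $\min\{x(U):x\in P_{\text{shadow}}^{\uparrow}(G)\}$. By LP duality (or directly: the constraint $x(U)\ge r(S)-r(S-U)$ itself forces $x(U)\ge r(S)-r(S-U)$, and this is tight since it is achieved by the shadow vector of an appropriate base, as in the proof of Proposition~\ref{prop:lpgsubset} and the remark following Corollary~\ref{cor:intsol}), this minimum equals $r(S)-r(S-U)$. Substituting back yields exactly $\max_{\emptyset\ne U\subseteq S}\big(r(S)-r(S-U)-q(U)\big)/p(U)$.

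For the reverse inequality — that this value is actually achievable by both players — I would again mirror \cite{Sz}: the Defender's optimal mixed strategy is a convex combination of bases realizing the optimal $z\in P_{\text{shadow}}(G)$ (such a $z$ exists by Theorem~\ref{thm:main} combined with the fact that the constrained LP minimum is attained at a vertex, i.e.\ at a shadow vector of a base, cf.\ the proof of Corollary~\ref{cor:intsol}), and the Attacker's optimal mixed strategy is supported on (the elements of) a set $U$ achieving the outer maximum, with probabilities proportional to $p(s)$ for $s\in U$. One then checks directly that against the Defender's strategy no Attacker pure response does better than $V$, and against the Attacker's strategy no Defender base does better than $V$; both verifications are the same bookkeeping computations as in \cite{Sz} with $sh_B$ in place of incidence vectors, using that $\sum_{s\in U}p(s)\,sh_B(s)=sh_B(U)\ge r(S)-r(S-U)$ from Proposition~\ref{prop:lpgsubset}.

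The main obstacle is the honest verification that replacing the matroid base polytope by the shadow polytope does not break any step of the argument of \cite[Theorem 5]{Sz}: the two places that genuinely use the polytope are (a) the passage to the up-hull, which is fine precisely because the coefficient vectors $\mathbf{1}_U$ are nonnegative, and (b) the existence of a \emph{base} (not merely a fractional point) attaining the optimum of the min side — here one needs that the LP minimum over $P_{\text{shadow}}^{\uparrow}(G)$ with a nonnegative objective is attained on $P_{\text{shadow}}(G)$ itself, which follows from Theorem~\ref{thm:main} together with Theorem~\ref{thm:boyd}/Lemma~\ref{lem:objfn} exactly as in the proof of Theorem~\ref{thm:main}. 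Once these two points are nailed down, the rest is a routine transcription, so I would keep the write-up brief and explicitly refer the reader to \cite{Sz} for the parts that are unchanged.
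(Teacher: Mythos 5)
Your overall plan (reduce to the Defender's optimization over the shadow polytope, pass to the up-hull, invoke Theorem~\ref{thm:main}) is the right one and matches the paper's, and your Attacker-side lower bound via a strategy proportional to $p$ on a maximizing set $U$ is sound. But your third step contains a genuine error: it is \emph{not} true that $\min\{x(U):x\in P_{\text{shadow}}^{\uparrow}(G)\}=r(S)-r(S-U)$, i.e.\ the constraint indexed by $U$ need not be the binding one, and there need not exist a base $B$ with $sh_B(U)=r(S)-r(S-U)$. This is a matroid fact ($\min_B|B\cap U|=r(S)-r(S-U)$) that does \emph{not} survive the passage to shadow vectors. Concretely, take the undirected branching greedoid of the path $r-v_1-v_2$ with edges $a=rv_1$, $b=v_1v_2$ and $U=\{a,b\}$: the unique base $B$ has $sh_B(a)=2$ and $sh_B(b)=1$, so $\min\{x(U):x\in P_{\text{shadow}}^{\uparrow}(G)\}=sh_B(U)=3$, while $r(S)-r(S-U)=2-0=2$; indeed the constraints for $W=\{a\}$ and $W=\{b\}$ already force $x(U)\ge 3$ on all of $Q$. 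Consequently your term-by-term substitution of $r(S)-r(S-U)$ for $\min_z z(U)$ is invalid; the two maxima over $U$ do coincide in the end, but only as a consequence of the theorem being proved, so as written the step is circular.

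The fix --- and this is what the paper does --- is to avoid computing $\min_z z(U)$ for individual sets $U$ altogether. By the minimax theorem, $V=\min\{\mu:\exists x\in P_{\text{shadow}}(G),\ d(s)x(s)-c(s)\le\mu\ \forall s\}=\min\{\mu:\exists x\in P_{\text{shadow}}(G),\ x\le\mu p+q\}=\min\{\mu:\mu p+q\in P_{\text{shadow}}^{\uparrow}(G)\}$, and Theorem~\ref{thm:main} is then used only as a \emph{membership test} for the single point $\mu p+q$: it lies in the up-hull if and only if $\mu\, p(U)+q(U)\ge r(S)-r(S-U)$ for all $U$, which rearranges directly to the claimed formula. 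No tightness of any individual constraint at a base, and no explicit Attacker strategy, is needed; your ``reverse inequality'' paragraph then becomes redundant. If you prefer to keep the two-sided strategy construction, it can be salvaged (the Attacker bound needs only Proposition~\ref{prop:lpgsubset}, and the Defender bound only the membership test just described), but the write-up must not route through the false identity $\min_B sh_B(U)=r(S)-r(S-U)$.
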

\begin{proof}
Denote the game value by $V$ and 
assume that a mixed strategy of the Defender
$\{\delta(B):B\in\mathcal{B}\}$ (that is, a probability distribution
$\delta$ on $\mathcal{B}$) is given. Then assuming that the Attacker
chooses a given fixed 
element $s\in S$ in the game, the Defender's expected loss is 
\begin{equation}\label{eq:defpay}
\sum_{B\in\mathcal{B}}\delta(B)\cdot\big(d(s)\cdot sh_B(s)-c(s)\big)=
d(s)\cdot\left(\sum_{B\in\mathcal{B}}\delta(B)\cdot sh_B(s)\right)-c(s).
\end{equation}
Let $x(s)=\sum\{\delta(B)\cdot sh_B(s):B\in\mathcal{B}\}$ for all
$s\in S$. Then the vector $x\in\mathbb{R}^S$ is nothing but an element
of $P_{\text{shadow}}(G)$ by definition (since the values
$\delta(B)$ form the set of coefficients of a convex
combination). Since, by definition, the Defender's objective is to
minimize the maximum expected loss she has to suffer, her task amounts
to the following by (\ref{eq:defpay}):
\begin{equation}\label{eq:defjob1}
\min\big\{\mu: \exists x\in P_{\text{shadow}}(G), 
d(s)\cdot x(s)-c(s)\le\mu\mbox{ for all }s\in S\big\}.
\end{equation}
In other words, the minimum in (\ref{eq:defjob1}) is equal to $V$
by Neumann's Minimax Theorem. Rearranging (\ref{eq:defjob1}):
\begin{equation*}
V=\min\big\{\mu: \exists x\in P_{\text{shadow}}(G), 
x\le\mu\cdot p+q\}.
\end{equation*}
Using the definition of $P_{\text{shadow}}^{\uparrow}(G)$
this is further equivalent to the following: 
\begin{equation}\label{eq:defjob3}
V=\min\big\{\mu: \mu\cdot p+q\in P_{\text{shadow}}^{\uparrow}(G)\}.
\end{equation}
By Theorem~\ref{thm:main}
$\mu\cdot p+q\in P_{\text{shadow}}^{\uparrow}(G)$ is true if
and only if 
\begin{equation*}
\mu\cdot p(U)+q(U)\ge r(S)-r(S-U)
\end{equation*}
holds for all $U\subseteq S$. Then simple rearranging (and observing
that this inequality is trivial for $U=\emptyset$) immediately gives
that $\mu\cdot p+q\in P_{\text{shadow}}^{\uparrow}(G)$ is true if and
only if
\begin{equation*}
\mu\ge\max_{\emptyset\ne U\subseteq S}
\frac{r(S)-r(S-U)-q(U)}{p(U)}.
\end{equation*}
Hence $V$, the minimum of all such $\mu$'s is exactly this maximum.\qed
\end{proof}

If specialized to the branching greedoid and to the $c\equiv0$ case it
follows that the value of the Rooted Spanning Tree Game is the
reciprocal of another known graph reliability metric, also defined in
\cite{Cun}. Interested readers are referred to \cite{Sz2} for the
details. Furthermore, the above theorem also generalizes the first
statement of Theorem~\ref{thm:mbgame}. However, generalizing the
algorithmic statement of Theorem~\ref{thm:mbgame} to the Local Forest
Greedoid Base Game is left as an open problem.

\section{Optimality of the Greedy Algorithm in
  Greedoids}\label{sect:greedyalg}

We start with the following theorem which seems to be new, but its
proof is just an adaptation of that of the result of Korte and
Lov\'asz \cite{KL2}, \cite[Theorem~XI.2.2]{KLS} mentioned at the end
of Section~\ref{sect:prelimgreedy} on the optimality of the greedy
algorithm in case of linear objective functions.

\begin{theorem}\label{thm:greedyopt}
Let $G=(S,\mathcal{F})$ be an arbitrary greedoid and
$w:\mathcal{F}\rightarrow\mathbb{R}$ an objective function that
fulfills the following property:
\begin{numberedlist}[start=1]
\item \label{prop:greedyopt}
If for some $A\subseteq B$, $A\in\mathcal{F}$, $A+x\in\mathcal{F}$,
$B\in\mathcal{B}$ and $x\in S-B$ it holds that $w(A+x)\ge w(A+u)$ for
every $u\in\Gamma(A)$ then there exists a $y\in B-A$ such that
$B-y+x\in\mathcal{B}$ and $w(B-y+x)\ge w(B)$.
\end{numberedlist}
Then the greedy algorithm gives a maximum base with respect to $w$. 
\end{theorem}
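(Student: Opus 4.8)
The plan is to prove that the greedy algorithm is optimal by a standard exchange argument: I would take the base $B_g=\{s_1,\ldots,s_r\}$ produced by the greedy algorithm (with elements chosen in this order) and an arbitrary base $B^*$ of $G$, and show $w(B_g)\ge w(B^*)$. The key is to transform $B^*$ into $B_g$ by a sequence of single-element exchanges, each of which does not decrease the value of $w$, using property \ref{prop:greedyopt} repeatedly. Since property \ref{prop:greedyopt} is phrased in terms of the greedy choice being locally best, I expect to apply it along the greedy run: at stage $i$, the set $A=B_i:=\{s_1,\ldots,s_{i-1}\}$ is feasible, $A+s_i\in\mathcal{F}$, and $w(A+s_i)\ge w(A+u)$ for every $u\in\Gamma(A)$ by the definition of the greedy step.

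First I would argue, by induction on $i$, that one can maintain a base $B^{(i)}$ of $G$ with $B_i\subseteq B^{(i)}$ and $w(B^{(i)})\ge w(B^*)$. For the base case $i=1$ take $B^{(1)}=B^*$. For the inductive step, suppose $B_i\subseteq B^{(i)}$; if $s_i\in B^{(i)}$ we are done with $B^{(i+1)}=B^{(i)}$. Otherwise $s_i\notin B^{(i)}$, and since $B_i\subseteq B^{(i)}$, $B_i\in\mathcal{F}$, $B_i+s_i\in\mathcal{F}$, $B^{(i)}\in\mathcal{B}$, $s_i\in S-B^{(i)}$, and $w(B_i+s_i)\ge w(B_i+u)$ for all $u\in\Gamma(B_i)$, property \ref{prop:greedyopt} gives a $y\in B^{(i)}-B_i$ with $B^{(i)}-y+s_i\in\mathcal{B}$ and $w(B^{(i)}-y+s_i)\ge w(B^{(i)})$. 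Set $B^{(i+1)}=B^{(i)}-y+s_i$. The only subtlety is to verify $B_{i+1}\subseteq B^{(i+1)}$: since $y\notin B_i$ and $y\ne s_i$ (as $s_i\notin B^{(i)}$ while $y\in B^{(i)}$), removing $y$ does not delete any element of $B_i$, and we are adding $s_i$, so indeed $B_i+s_i=B_{i+1}\subseteq B^{(i+1)}$. Also $w(B^{(i+1)})\ge w(B^{(i)})\ge w(B^*)$.

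After $r$ steps, $B^{(r+1)}$ is a base with $B_{r+1}=B_g\subseteq B^{(r+1)}$; since $B_g$ is itself a base (the greedy algorithm stops exactly when $\Gamma$ is empty, and all bases of a greedoid have the same size by \ref{prop:exchange}), we get $B^{(r+1)}=B_g$, hence $w(B_g)=w(B^{(r+1)})\ge w(B^*)$. As $B^*$ was an arbitrary base, $B_g$ is a maximum base.

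The main obstacle I anticipate is making sure property \ref{prop:greedyopt} is applicable at every stage, which hinges on two facts: that $B_i$ is genuinely the feasible set held by the greedy algorithm just before it picks $s_i$ (so that $w(B_i+s_i)\ge w(B_i+u)$ for all $u\in\Gamma(B_i)$ is exactly the greedy selection rule), and that the $y$ returned by \ref{prop:greedyopt} is automatically outside $B_i$ — which is built into the statement ($y\in B-A$). A secondary point worth stating carefully is that the greedy output really is a base, which follows because $\Gamma(A)=\emptyset$ means no feasible superset strictly contains $A$, i.e.\ $A$ is maximal feasible, and by \ref{prop:exchange} all maximal feasible sets have the same cardinality $r(S)$. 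Everything else is bookkeeping.
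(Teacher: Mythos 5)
Your proof is correct and is essentially the paper's argument: the paper applies property \ref{prop:greedyopt} once, to a maximum base $B_m$ chosen so that its common prefix with the greedy run is as long as possible, and derives a contradiction, whereas you iterate the very same exchange starting from an arbitrary base $B^*$ until it has been transformed into $B_g$. The instantiation of \ref{prop:greedyopt} (with $A$ a greedy prefix, $x$ the next greedy element, $B$ the competing base) and the observation that the returned $y\in B-A$ keeps the prefix intact are identical in both versions, so no further comparison is needed.
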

\begin{proof}
Assume by way of contradiction that the greedy algorithm gives 
the base $B_g=\{a_1,a_2,\ldots,a_r\}$ choosing the elements in
this order, but $B_g$ is not maximum with respect to $w$. Choose a
maximum base $B_m$ with respect to $w$ such that
$\max\{i:a_1,\ldots,a_i\in B_m\}$ is maximum possible, let this
maximum be $k$ and $A=\{a_1,\ldots,a_k\}$. Then $A\in\mathcal{F}$,
$A\subseteq B_m$, $a_{k+1}\notin B_m$ and $w(A+a_{k+1})\ge w(A+u)$ for
every $u\in\Gamma(A)$ by the operation of the greedy
algorithm. Therefore, by \ref{prop:greedyopt}, there exists a $y\in
B_m-A$ such that $B_m-y+a_{k+1}\in\mathcal{B}$ and
$w(B_m-y+a_{k+1})\ge w(B_m)$. Therefore $B_m-y+a_{k+1}$ is also a
maximum base with respect to $w$, but
$\{a_1,\ldots,a_k,a_{k+1}\}\subseteq B_m-y+a_{k+1}$ contradicts the
choice of $B_m$.\qed
\end{proof}

It is worth noting that, in spite of its simplicity, the above theorem
implies the optimality of the greedy algorithm in all three examples
listed in Section~\ref{sect:prelimgreedy}. This is easy to check in
case of Examples~\ref{ex:matroid} and \ref{ex:prim} and in case of
Example~\ref{ex:dijkstra} it will follow from the results
below. Furthermore, it is not too hard to show that
Theorem~\ref{thm:greedyopt} also implies Theorem~\ref{thm:kl} in case
of objective functions $w:\mathcal{F}\rightarrow\mathbb{R}$ that are
independent of the ordering. (This could be proved by an argument
similar to that of Theorem~\ref{thm:intgreedyopt} below, we omit the
details here.)

Moreover, Theorem~\ref{thm:greedyopt} is in a sense best possible as
shown by the following theorem. To claim the theorem, we need to
extend the definition of minors of greedoids given in
Section~\ref{sect:greedoid} to incorporate modifying the objective
function $w_G:\mathcal{F}\rightarrow\mathbb{R}$ in an obvious way: in
case of a deletion $G\setminus X$ $w_G$ is simply restricted to $S-X$,
while in case of a contraction $G/X$ the modified objective function
becomes $w_{G/X}(A)=w_G(A\cup X)$.

\begin{theorem}
Assume that the objective function
$w_G:\mathcal{F}\rightarrow\mathbb{R}$ violates condition
\ref{prop:greedyopt} for a greedoid $G=(S,\mathcal{F})$. Then there
exists a minor $H$ of $G$ such that a legal running of the greedy
algorithm on $H$ gives a base that is not maximum with respect to
$w_H$. 
\end{theorem}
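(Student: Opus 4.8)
The plan is to build the required minor by \emph{contracting} the feasible set $A$ witnessing the violation and then \emph{deleting} every element outside $B+x$. So fix sets $A\subseteq B$ with $A\in\mathcal{F}$, $A+x\in\mathcal{F}$, $B\in\mathcal{B}$, $x\in S-B$ for which $w_G(A+x)\ge w_G(A+u)$ holds for every $u\in\Gamma(A)$ but no $y\in B-A$ satisfies both $B-y+x\in\mathcal{B}$ and $w_G(B-y+x)\ge w_G(B)$; such sets exist by hypothesis, and $x\notin A$ since $A\subseteq B$. Put $H=(G/A)\setminus(S-B-x)$; this is a legitimate minor (the contraction is allowed because $A\in\mathcal{F}$), its ground set is $(B-A)+x$, and its objective function satisfies $w_H(Y)=w_G(Y\cup A)$ for every feasible set $Y$ of $H$.

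First I would verify that the data of the violation survives both operations. Under the contraction one has $\Gamma_{G/A}(\emptyset)=\Gamma_G(A)$ and $w_{G/A}(\{u\})=w_G(A+u)$ for each such $u$, so $w_G(A+x)\ge w_G(A+u)$ for all $u\in\Gamma(A)$ says precisely that $x$ is a legal first choice of the greedy algorithm on $G/A$; since the subsequent deletion only shrinks $\Gamma(\emptyset)$ and leaves $w$ unchanged, $x$ is still a legal first choice on $H$. Moreover $B-A$ is a base of $G/A$ (because $B\in\mathcal{F}$ and $A\subseteq B$), and it remains a base of $H$: as $B$ is a base of $G$ we have $B+x\notin\mathcal{F}$, hence no feasible set of $H$ properly contains $B-A$.

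Next I would list the bases of $H$. Since $B+x\notin\mathcal{F}$, every feasible set of $H$ has size at most $|B-A|$, so each base of $H$ either equals $B-A$ or has the form $(B-A)-y+x$ for some $y\in B-A$; and $(B-A)-y+x$ is a base of $H$ exactly when $B-y+x\in\mathcal{F}$, equivalently (by comparing cardinalities) when $B-y+x\in\mathcal{B}$. For each such $y$ the failure of the ``then''-part of \ref{prop:greedyopt} forces $w_G(B-y+x)<w_G(B)$, that is, $w_H\big((B-A)-y+x\big)<w_H(B-A)$. Hence $B-A$ is the \emph{unique} $w_H$-maximum base of $H$. Now let the greedy algorithm run on $H$ and legally pick $x$ at the first step; as any greedy run on a greedoid does (a maximal feasible set is a base by \ref{prop:exchange}), it terminates at a base of $H$, which contains $x$ and is therefore different from $B-A$ — so it is not $w_H$-maximum, which is exactly the claim.

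The step I expect to be the crux is the deletion down to $B+x$. Contracting $A$ already makes $x$ a legal opening move, but on its own it does not prevent the greedy from drifting into some other maximum base that happens to use $x$, in which case there would be no failure to exhibit; paring the ground set down to $B+x$ collapses the set of bases to exactly $B-A$ together with the sets $B-y+x$, and these are precisely the bases controlled by the violated ``then''-clause, which is what pins $B-A$ down as the unique optimum. The remaining verifications — legitimacy of the two minor operations and the transfer of feasibility, of continuation sets, and of objective values under them — are routine.
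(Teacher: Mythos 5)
Your proof is correct and follows essentially the same route as the paper: form the minor with ground set $(B-A)+x$ by contracting $A$ and deleting $S-B-x$ (the paper writes it as $(G\setminus(S-B-x))/A$, which is the same minor), note that $x$ is a legal opening move, and observe that the greedy run must end at a base of the form $(B-A)-y+x$, whose $w_H$-value is forced below $w_H(B-A)$ by the failed ``then''-clause. Your explicit enumeration of the bases of $H$ is a slightly more detailed rendering of the same argument; no gap.
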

\begin{proof}
Assume that \ref{prop:greedyopt} is violated by an $A\in\mathcal{F}$,
$B\in\mathcal{B}$ and $x\in S-B$. Let $Y=S-B-x$ and $H=(G\setminus
Y)/A$. Then the greedy algorithm run on $H$ with respect to $w_H$ can
start with $x$ since $w_H(\{x\})\ge w_H(\{u\})$ for every $u\in
\Gamma(\emptyset)$ holds in $H$ by \ref{prop:greedyopt}. Therefore
this running of the greedy algorithm terminates with a base $B_g$ of
$H$ such that $x\in B_g$. Since the ground set of $H$ is $S_H=B-A+x$
and $B-A$ is a base of $H$, $S_H-B_g=\{y\}$ for some $y\in B-A$. Since
\ref{prop:greedyopt} is violated by $A$, $B$ and $x$, we have
$w(B-y+x)<w(B)$. Consequently, $w_H(B_g)=w(B-y+x)<w(B)=w_H(B-A)$ which
proves that $B_g$ is not maximum with respect to $w_H$.\qed
\end{proof}

The following theorem will be weaker than Theorem~\ref{thm:greedyopt}
-- not only because it applies to interval greedoids only, but also
because it will not cover Example~\ref{ex:prim} given in
Section~\ref{sect:prelimgreedy} (or the case of linear objective
functions in general). However, it can also be regarded as a corrected
version of the faulty condition \ref{prop:klrossz} mentioned in
Section~\ref{sect:prelimgreedy} and it will be easier to work with
later on.

\begin{theorem}\label{thm:intgreedyopt}
Let $G=(S,\mathcal{F})$ be an interval greedoid and
$w:\mathcal{F}\rightarrow\mathbb{R}$ an objective function that
fulfills the following property:
\begin{numberedlist}[start=2]
\item \label{prop:intgreedyopt} If for some $A\subseteq B$,
  $A\in\mathcal{F}$, $A+x\in\mathcal{F}$, $x,z\in S-B$ and 
  $B+x\in\mathcal{B}$, $B+z\in\mathcal{B}$ such
  that 
  $\Delta(B)\cup\{x,z\}\notin\mathcal{F}$ it holds that $w(A+x)\ge
  w(A+u)$ for every $u\in\Gamma(A)$ then $w(B+x)\ge w(B+z)$.
\end{numberedlist}
Then the greedy algorithm gives a maximum base with respect to $w$. 
\end{theorem}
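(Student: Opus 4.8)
The plan is to follow the same contradiction scheme that worked in Theorem~\ref{thm:greedyopt}: assume the greedy algorithm outputs a base $B_g = \{a_1,\ldots,a_r\}$ (chosen in this order) that is not maximum with respect to $w$, and among all maximum bases $B_m$ pick one agreeing with $B_g$ on the longest prefix, say $a_1,\ldots,a_k \in B_m$ but $a_{k+1}\notin B_m$; set $A=\{a_1,\ldots,a_k\}$. The obstacle, compared with the linear case, is that \ref{prop:intgreedyopt} only lets us compare $w(B+x)$ and $w(B+z)$ when $B+x$ and $B+z$ are both \emph{bases} and $\Delta(B)\cup\{x,z\}\notin\mathcal{F}$. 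So I cannot directly apply the hypothesis to $A\subseteq B_m$ and $x=a_{k+1}$: I first need to manoeuvre into a configuration of the required shape. The key preparatory step is to use the interval (local union) property and the exchange axiom to produce, from $B_m$, a base $B$ with $A\subseteq B$, $B+a_{k+1}\in\mathcal{B}$, and a second extension element.

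First I would observe that since $A+a_{k+1}\in\mathcal{F}$ and $A\subseteq B_m$ with $B_m$ a base, repeated augmentation (property~\ref{prop:exchange}) starting from $A+a_{k+1}$ using $B_m$ builds a base $B^+ \supseteq A+a_{k+1}$; writing $B^+ = B + a_{k+1}$ with $B = B^+ - a_{k+1}$, I get $A\subseteq B$ and $B+a_{k+1}\in\mathcal{B}$, and I should also be able to arrange (again by the exchange axiom, feeding in $B_m$) that $B+z\in\mathcal{B}$ for some $z\in B_m - B$, so that $B$ "lies between" the greedy prefix and a maximum base while still admitting two base-extensions $a_{k+1}$ and $z$. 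The one case I must handle separately is when $\Delta(B)\cup\{a_{k+1},z\}\in\mathcal{F}$, i.e.\ the side condition of \ref{prop:intgreedyopt} fails; here the two extensions are "compatible," and I would argue that then $\Delta(B)+a_{k+1}+z$ contains both, so by choosing $z$ inside $B_m$ appropriately one can instead swap $a_{k+1}$ directly into $B_m$ (replacing some element outside $A$) to contradict the maximality of the agreement length $k$ — essentially the local-union structure forces a "chain-free" exchange to be available. This is the step I expect to be the main difficulty: ensuring that whenever the side condition $\Delta(B)\cup\{x,z\}\notin\mathcal{F}$ is not met, we can still perform an exchange that increases the prefix agreement without decreasing $w$.

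In the remaining (generic) case $\Delta(B)\cup\{a_{k+1},z\}\notin\mathcal{F}$, the hypotheses of \ref{prop:intgreedyopt} are satisfied with $x=a_{k+1}$: indeed $A\subseteq B$, $A\in\mathcal{F}$, $A+a_{k+1}\in\mathcal{F}$, $a_{k+1},z\in S-B$, $B+a_{k+1},B+z\in\mathcal{B}$, and $w(A+a_{k+1})\ge w(A+u)$ for all $u\in\Gamma(A)$ by the greedy choice at step $k+1$. Hence $w(B+a_{k+1})\ge w(B+z)$. Now $B+z$ can be obtained from $B_m$ by exchanges and, tracking $w$ through these (using that $B+z\in\mathcal{B}$ was extracted so as not to lose weight, or iterating the argument), $w(B+z)\ge w(B_m)$; combining, $w(B+a_{k+1})\ge w(B_m)$, so $B+a_{k+1}$ is also a maximum base. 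But $\{a_1,\ldots,a_k,a_{k+1}\}=A+a_{k+1}\subseteq B+a_{k+1}$, contradicting the choice of $B_m$ as having the longest agreement with $B_g$. This closes the induction and shows the greedy output is maximum. I would double-check the bookkeeping that the auxiliary base $B$ can be chosen to simultaneously satisfy $A\subseteq B$, $w(B+z)\ge w(B_m)$, and $B+a_{k+1}\in\mathcal{B}$ — this is where the interval property (giving well-defined $\Delta$) and the accessibility of feasible orderings are used, and where any subtlety in the proof of the Korte–Lovász linear-case theorem~\cite[Theorem~XI.2.2]{KLS} must be adapted with the order-dependence of $w$ in mind.
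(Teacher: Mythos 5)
Your overall strategy is the right one and matches the paper's in outline: both arguments reduce the problem to exhibiting, for the maximum base $B_m$ agreeing with the greedy output on the longest prefix $A=\{a_1,\ldots,a_k\}$, an element $y\in B_m-A$ such that $B_m-y+a_{k+1}$ is a base and \ref{prop:intgreedyopt} applies to $B=B_m-y$ with $x=a_{k+1}$ and $z=y$. (Note that with this choice one has $B+z=B_m$ exactly, so no ``tracking of $w$ through exchanges'' is needed; the comparison $w(B+a_{k+1})\ge w(B+z)=w(B_m)$ comes in one shot. Your own construction of $B$ by augmenting $A+a_{k+1}$ from $B_m$ also yields $B\subseteq B_m$ with $|B_m-B|=1$, so this part of your worry is moot.) The paper packages all of this as: \ref{prop:intgreedyopt} implies \ref{prop:greedyopt}, then invoke Theorem~\ref{thm:greedyopt}.

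However, the step you yourself flag as ``the main difficulty'' --- guaranteeing the side condition $\Delta(B)\cup\{x,z\}\notin\mathcal{F}$ --- is a genuine gap, and your fallback for the bad case does not work. If $\Delta(B)\cup\{a_{k+1},z\}\in\mathcal{F}$, you propose to ``swap $a_{k+1}$ directly into $B_m$''; but any such swap produces a base whose weight you cannot compare to $w(B_m)$, because \ref{prop:intgreedyopt} is the only tool available for comparing weights of bases and it is precisely unavailable when the side condition fails. The correct fix is to choose $y$ so that the bad case never occurs, rather than to handle it separately. Concretely (this is what the paper does): extend a feasible ordering $(b_1,\ldots,b_k)$ of $A$ to a feasible ordering $(b_1,\ldots,b_r)$ of $B_m$, let $t$ be the \emph{largest} index with $B_{t-1}+x\in\mathcal{F}$ (where $B_i=\{b_1,\ldots,b_i\}$ and $x=a_{k+1}$), and set $y=b_t$. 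Maximality of $t$ ensures that augmenting $B_{t-1}+x$ successively from $B_{t+1},B_{t+2},\ldots$ never picks up $b_t$, so $B_m-y+x$ is a base; and if $\Delta(B_m-y)\cup\{x,y\}$ were feasible, then $B_{t-1}+x$ and $B_{t-1}+y=B_t$, both feasible subsets of it containing $B_{t-1}\subseteq\Delta(B_m-y)$, would give $B_t+x\in\mathcal{F}$ by the local union property, contradicting the maximality of $t$ (or the fact that $B_m$ is a base when $t=r$). Your construction, which augments $A+a_{k+1}$ upward from $B_m$ and takes $z$ to be whatever element happens to be left out, gives you no control over $z$ and hence no way to rule out the bad case.
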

\begin{proof}
We will show that \ref{prop:intgreedyopt} implies \ref{prop:greedyopt}
which will obviously settle the proof by Theorem~\ref{thm:greedyopt}. 
So let $A$, $B$ and $x$ be given such that $A\subseteq B$,
$A,A+x\in\mathcal{F}$, $B\in\mathcal{B}$ and $w(A+x)\ge w(A+u)$ for
every $u\in\Gamma(A)$. We need to show the existence of a $y\in B-A$
according to \ref{prop:greedyopt}. 

Let $(b_1,\ldots,b_k)$ be a feasible ordering of $A$ and, using
\ref{prop:exchange}, augment this repeatedly to get a feasible
ordering $(b_1,\ldots,b_k,b_{k+1},\ldots,b_r)$ of $B$. Denote
$B_0=\emptyset$ and $B_i=\{b_1,\ldots,b_i\}$ for every $1\le i\le
r$. Let $t\in\{1,\ldots,r\}$ be the largest index such that
$B_{t-1}+x\in\mathcal{F}$. Obviously, $t$ exists and $t\ge k+1$ since
$B_k+x=A+x\in\mathcal{F}$. Now set $y=b_t$; we claim that this is a
suitable choice for \ref{prop:greedyopt}.

Trivially, $y\in B-A$ by $t\ge k+1$. To show 
$B-y+x\in\mathcal{F}$, augment $B_{t-1}+x$ from $B_{t+1}$; then 
augment the obtained feasible set from $B_{t+2}$ and continue like
this until a base is obtained. Then $b_t$ can never occur as an 
augmenting element during this process by the choice of $t$ which 
implies $B-y+x\in\mathcal{F}$ as claimed. 

Let $C=B-y$. We claim that $\Delta(C)\cup\{x,y\}\notin\mathcal{F}$, so
assume the opposite towards a contradiction.  Since
$B_{t-1}\in\mathcal{F}$ and $B_{t-1}\subseteq C$, we have
$B_{t-1}\subseteq\Delta(C)$. Furthermore, $B_{t-1}+x\in\mathcal{F}$ by
the choice of $t$ and $B_{t-1}+y=B_t\in\mathcal{F}$ is also
true. Since $B_{t-1}+x,B_{t-1}+y\subseteq\Delta(C)\cup\{x,y\}$,
$B_{t-1}\cup\{x,y\}=B_t+x\in\mathcal{F}$ follows by the local union
property \ref{lup}. This either contradicts the choice of $t$ if $t<r$
or the fact that $B$ is a base if $t=r$.

Consequently, since we have $C+y=B\in\mathcal{B}$,
$C+x=B-y+x\in\mathcal{B}$ and $w(A+x)\ge w(A+u)$ for every
$u\in\Gamma(A)$, we get $w(C+x)\ge w(C+y)$ from
\ref{prop:intgreedyopt}, which concludes the proof by $C+x=B-y+x$ and
$C+y=B$.\qed
\end{proof}

The next theorem gives a generalization of Theorem~\ref{thm:boyd}. 

\begin{theorem}\label{thm:dijkstra}
Let $G=(S,\mathcal{F})$ be a local forest greedoid, $\mathcal{P}$
its set of paths and $f:\mathcal{P}\rightarrow\mathbb{R}$ a
function that satisfies the following monotonicity constraints:
\begin{enumerate}[label=(\roman*)]
\item if $A,B\in\mathcal{P}$ and $A\subseteq B$ then $f(A)\le f(B)$;
\item if $A,B,A\cup C,B\cup C\in\mathcal{P}$ and $f(A)\le f(B)$ then 
$f(A\cup C)\le f(B\cup C)$.
\end{enumerate}
Finally, let $w(A)=\sum\{f(P_x):x\in A\}$ for every
  $A\in\mathcal{F}$. Then the greedy algorithm gives a minimum base
  with respect to $w$.
\end{theorem}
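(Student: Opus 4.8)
The plan is to deduce Theorem~\ref{thm:dijkstra} from Theorem~\ref{thm:intgreedyopt}, much as Theorem~\ref{thm:boyd} is a special case of what we want. Since we are minimizing, we apply Theorem~\ref{thm:intgreedyopt} to the objective function $-w$; concretely, we must verify property~\ref{prop:intgreedyopt} for $-w$, i.e.\ that whenever $A\subseteq B$, $A,A+x\in\mathcal{F}$, $x,z\in S-B$, $B+x,B+z\in\mathcal{B}$, $\Delta(B)\cup\{x,z\}\notin\mathcal{F}$, and $-w(A+x)\ge -w(A+u)$ for every $u\in\Gamma(A)$ (equivalently $w(A+x)\le w(A+u)$, so $x$ is a greedy-minimal continuation of $A$), then $w(B+x)\le w(B+z)$. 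The point is that local forest greedoids admit the path structure of Theorem~\ref{thm:lfgpath}, so $w(A+x)-w(A)=f(P_x^{A+x})$ depends only on the $x$-path.

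First I would record the key ``path-stability'' facts. For $C\in\mathcal{F}$ and $x\notin C$ with $C+x\in\mathcal{F}$, write $P_x^{C+x}$ for the $x$-path; by the local union property $P_x^{C+x}=P_x^{C'+x}$ whenever $C\subseteq C'$, $C'+x\in\mathcal{F}$ and $P_x^{C+x}\subseteq C'+x$ — in particular $w(C+x)-w(C)=f(P_x^{C+x})$ and this increment is unchanged when we enlarge $C$ inside a feasible set containing $P_x^{C+x}$ (this is essentially Claim~\ref{cl:1} of Theorem~\ref{thm:lfgdualsol}). The greedy-minimality hypothesis says $f(P_x^{A+x})\le f(P_u^{A+u})$ for all $u\in\Gamma(A)$. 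Taking a feasible ordering $(b_1,\dots,b_k)$ of $A$ and extending it to a feasible ordering $(b_1,\dots,b_r)$ of $B$, I would argue by induction down this ordering that $x$ remains a greedy-minimal continuation at every stage, using monotonicity~(ii) of $f$ to ``transport'' the inequality $f(P_x^{B_i+x})\le f(P_w^{B_i+w})$ from $B_i$ to $B_{i+1}$ along the single added element $b_{i+1}$: writing the relevant paths as $P\cup C$ with a common increment $C$ coming from $b_{i+1}$, condition~(ii) preserves the comparison. The condition $\Delta(B)\cup\{x,z\}\notin\mathcal{F}$, together with Theorem~\ref{thm:lfgpath}(iii), is exactly what forces the $x$-path and the $z$-path in $B+x$ and $B+z$ to not interfere, so that $w(B+x)-w(B)=f(P_x^{B+x})$ and $w(B+z)-w(B)=f(P_z^{B+z})$ can both be compared against increments at $A$; combining gives $w(B+x)\le w(B+z)$ as required.

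The main obstacle, I expect, is making the transport step in the induction fully rigorous: one must show that as $x$ ``survives'' as a greedy choice while we walk from $A$ up to $B$, the competing path $P_z^{B+z}$ (or its relevant truncation, which is a path by Theorem~\ref{thm:lfgpath}(iii)) actually arises as $P_u^{B_i+u}$ for an allowed continuation $u$ at some stage $B_i$, so that the hypothesis applies to it; and then that the ``extra'' elements added between that stage and $B$ contribute the *same* set $C$ to both $P_x$ and $P_z$, so that monotonicity~(ii) is applicable with a common $C$. This is where $\Delta(B)\cup\{x,z\}\notin\mathcal{F}$ must be used: it guarantees (via the local union property, as in the proof of Theorem~\ref{thm:intgreedyopt}) that one cannot simultaneously extend by $x$ and $z$, which is precisely what pins down a stage at which $z$ first becomes unavailable while $x$ is still available, isolating the branch point. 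Once this branch point is located, monotonicity~(i) handles the sub-path inclusion and monotonicity~(ii) handles the shared tail, and the chain of inequalities closes. The remainder is the bookkeeping with feasible orderings and paths already familiar from the proofs of Theorems~\ref{thm:lfgpath}, \ref{thm:intgreedyopt} and \ref{thm:lfgdualsol}.
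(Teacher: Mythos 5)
Your top-level plan --- verify property~\ref{prop:intgreedyopt} for $-w$ and invoke Theorem~\ref{thm:intgreedyopt} --- is exactly the paper's, and your route to the single inequality $f(P_x^{B+x})\le f(P_z^{B+z})$ (compare $x$ at $A$ against a continuation $u$ lying on $P_z^{B+z}$, then use monotonicity (i)) is also essentially the paper's. The genuine gap is that your argument tacitly assumes $B\in\mathcal{F}$: you take a feasible ordering of $B$ and write $w(B+x)-w(B)=f(P_x^{B+x})$. In condition~\ref{prop:intgreedyopt} the set $B$ is a base minus one element and is in general \emph{not} feasible (this is exactly the situation produced in the proof of Theorem~\ref{thm:intgreedyopt}, where the relevant set is $B-b_t$ for an interior $b_t$), so $w(B)$ is not even defined and, worse, the sums $w(B+x)=\sum_{e\in B+x}f(P_e^{B+x})$ and $w(B+z)=\sum_{e\in B+z}f(P_e^{B+z})$ differ in \emph{every} term indexed by $e\in B-\Delta(B)$: since no feasible subset of $B$ contains such an $e$, the path $P_e^{B+x}$ must pass through $x$ while $P_e^{B+z}$ passes through $z$. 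Your proposal never accounts for these terms.

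Handling them is the real content of the paper's proof. Lemma~\ref{lem:dijkstra} uses $\Delta(B)\cup\{x,z\}\notin\mathcal{F}$ (via the local union and intersection properties) to show that $P_e^{B+x}=P_x^{B+x}\cup H_e$ and $P_e^{B+z}=P_z^{B+z}\cup H_e$ with the \emph{same} tail $H_e=P_e^{B+x}\cap(B-\Delta(B))$, and only then is monotonicity (ii) applied, termwise, to upgrade $f(P_x^{B+x})\le f(P_z^{B+z})$ to $f(P_e^{B+x})\le f(P_e^{B+z})$ for each such $e$. You do sense that (ii) needs a ``common $C$'' and that $\Delta(B)\cup\{x,z\}\notin\mathcal{F}$ pins down a branch point, but you attach both to the wrong step: an induction transporting greedy-minimality of $x$ from $A$ up a feasible chain to $B$. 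That induction (essentially the discredited condition~\ref{prop:klrossz}) only ever visits feasible sets, can be carried out with monotonicity (i) alone, and is superfluous in the case $B\in\mathcal{F}$, where the paper concludes directly. The missing piece is precisely the decomposition $w(B+i)=w(\Delta(B))+f(P_i^{B+i})+\sum_{e\in B-\Delta(B)}f(P_e^{B+i})$ for $i\in\{x,z\}$ together with Lemma~\ref{lem:dijkstra}.
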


We will need the following lemma for proving the above theorem.

\begin{lemma}\label{lem:dijkstra}
Let $G=(S,\mathcal{F})$ be a local poset greedoid, $B\subseteq S$
and $x,z\in S-B$ such that $B+x\in\mathcal{F}$, $B+z\in\mathcal{F}$
and $\Delta(B)\cup\{x,z\}\notin\mathcal{F}$. Then
$P_e^{B+x}\cap(B-\Delta(B))=P_e^{B+z}\cap(B-\Delta(B))$ holds for every
$e\in B-\Delta(B)$.
\end{lemma}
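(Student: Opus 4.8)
The plan is to exploit the hypothesis $\Delta(B)\cup\{x,z\}\notin\mathcal F$ together with the local union and local forest properties to force the $x$-path and $z$-path structures in $B+x$ and $B+z$ to agree above $\Delta(B)$. First I would fix $e\in B-\Delta(B)$ and set $D=\Delta(B)$. Since $B+x\in\mathcal F$ and $B+z\in\mathcal F$ are feasible with the common feasible subset $D$, and since $D\cup\{x,z\}$ is \emph{not} feasible, I expect that no path of $B+x$ through an element of $B-D$ can contain $x$ while simultaneously the analogous path in $B+z$ contains $z$; more precisely, for each such $e$, at most one of $x,z$ lies in the relevant path. The key step is to show that $x\in P_e^{B+x}$ if and only if $z\in P_e^{B+z}$, and that apart from the possible terminal element $x$ (resp.\ $z$), the two paths $P_e^{B+x}$ and $P_e^{B+z}$ have the same intersection with $B-D$.

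The main tool will be the following observation, analogous to the one used in the proof of Lemma~\ref{lem:objfn}: for $y\in B+x$ with $y\ne x$, one has $y\in P_e^{B+x}$ iff $e\notin\Delta\big((B+x)-y\big)$; and I would compare $\Delta\big((B+x)-y\big)$ with $\Delta\big((B+z)-y\big)$ for $y\in B-D$. Since $D\subseteq (B+x)-y$ and $D\cup\{x\}$ \emph{is} feasible (as $B+x\supseteq D+x$) whereas $D\cup\{x,z\}$ is not, and symmetrically for $z$, one sees that $x\in\Delta\big((B+x)-y\big)$ and $z\in\Delta\big((B+z)-y\big)$ whenever $y\notin P_x^{B+x}$, and that in all cases $\Delta\big((B+x)-y\big)-x=\Delta\big((B+z)-y\big)-z$: both equal the unique base of $(B-y)\cup D$-type sets, which does not depend on whether we adjoin $x$ or $z$. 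The non-feasibility of $D\cup\{x,z\}$ is exactly what prevents, say, $z$ from sneaking into $\Delta\big((B+x)-y\big)$ and spoiling this symmetry. Translating back via the $y\in P_e$ $\Leftrightarrow$ $e\notin\Delta(\cdot-y)$ correspondence, restricted to $y\in B-D$, gives $P_e^{B+x}\cap(B-D)=P_e^{B+z}\cap(B-D)$.

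The step I expect to be the main obstacle is establishing rigorously that the presence of $x$ (resp.\ $z$) does not perturb which elements of $B-D$ get pulled into the $\Delta$-closures — i.e.\ that $\Delta\big((B+x)-y\big)\cap(B-D)=\Delta\big((B+z)-y\big)\cap(B-D)$ for every $y\in B-D$. This is where the hypothesis $\Delta(B)\cup\{x,z\}\notin\mathcal F$ must be used in full strength, very possibly through Theorem~\ref{thm:lfgpath} (the unique feasible ordering of paths) to control how $x$ or $z$ can appear in a feasible ordering of these closures, or through a short exchange argument using \ref{prop:exchange} between $(B+x)-y$ and $(B+z)-y$. Once that symmetry is in hand, the equality claimed in the lemma for a fixed $e$ follows immediately, and since $e\in B-\Delta(B)$ was arbitrary, the proof is complete.
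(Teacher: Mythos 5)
Your overall strategy -- translating membership in paths into the complement of $\Delta$-closures via the equivalence $y\in P_e^{A}\Leftrightarrow e\in A-\Delta(A-y)$ from the proof of Lemma~\ref{lem:objfn}, and then comparing $\Delta\big((B+x)-y\big)$ with $\Delta\big((B+z)-y\big)$ -- is viable and genuinely different in framing from the paper's proof, which works directly with $P_e^{B+x}$ and $P_e^{B+z}$ and sandwiches them using the local union and intersection properties. However, as written your argument has a genuine gap, and you say so yourself: the central claim $\Delta\big((B+x)-y\big)-x=\Delta\big((B+z)-y\big)-z$ is asserted with the justification that ``both equal the unique base of $(B-y)\cup D$-type sets, which does not depend on whether we adjoin $x$ or $z$,'' which begs the question. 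Adjoining $x$ genuinely can enlarge the base: $\Delta(B-y)=D$, whereas $\Delta\big((B-y)+x\big)$ typically pulls in elements of $B-D$ that are only reachable through $x$, so the independence from the choice of $x$ versus $z$ is exactly the content of the lemma and cannot be taken for granted. (A smaller slip: your remark that ``at most one of $x,z$ lies in the relevant path'' is off -- for every $e\in B-\Delta(B)$ one has \emph{both} $x\in P_e^{B+x}$ and $z\in P_e^{B+z}$, since no feasible subset of $B$ contains $e$. Also, Theorem~\ref{thm:lfgpath} is not available here: the lemma is stated for local poset greedoids, not local forest greedoids, so the unique-ordering machinery cannot be invoked.)

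The gap is fillable, and the filling is essentially the mechanism the paper applies directly to the paths. First, augmenting $D=\Delta(B)$ from $B+x$ and from $B+z$ forces $D+x,D+z\in\mathcal{F}$ (no element of $B-D$ can augment $D$ inside $B$), whence $D\cup\{x,z\}\notin\mathcal{F}$ upgrades to $D\cup\{x,z\}\notin\mathcal{F}^{\vee}$ by the local union property. Now set $K_x=\Delta\big((B-y)+x\big)-x$; repeatedly augment $D+z$ from $K_x+x$. The element $x$ can never be the augmenting element, since that would make $D\cup\{x,z\}$ subfeasible; so the process terminates with $K_x+z\in\mathcal{F}$, hence $K_x\subseteq\Delta\big((B-y)+z\big)-z=K_z$, and symmetry gives $K_x=K_z$. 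With that in hand your duality step does finish the proof, since $e\ne x,z$ lets you pass from $e\notin K_x+x$ to $e\notin K_z+z$. So the two proofs differ in bookkeeping -- yours quantifies over the removed element $y$ and compares closures, the paper's fixes $e$ and compares the paths themselves -- but both hinge on the same ``blocked augmentation'' consequence of $\Delta(B)\cup\{x,z\}\notin\mathcal{F}^{\vee}$, and the paper's direct route is somewhat shorter because it needs this trick only once per~$e$ rather than once per~$y$.
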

\begin{proof}
Since no feasible set in $B$ can contain $e$ by $e\in B-\Delta(B)$ and
the local union property \ref{lup}, we have $x\in P_e^{B+x}$ and $z\in
P_e^{B+z}$. Let $D_i=P_e^{B+i}\cap\Delta(B)$ and
$H_i=P_e^{B+i}\cap(B-\Delta(B))$ for $i\in\{x,z\}$. We need to show
$H_x=H_z$.

Since $P_e^{B+x},\Delta(B),B+x\in\mathcal{F}$ and 
$P_e^{B+x},\Delta(B)\subseteq B+x$, the local union property
implies $\Delta(B)\cup H_x+x\in\mathcal{F}$.

We claim that $\Delta(B)\cup H_x+z\in\mathcal{F}$. To show this, first
observe that augmenting $\Delta(B)$ from $B+x$ and $B+z$ implies 
$\Delta(B)+x,\Delta(B)+z\in\mathcal{F}$ by the definition of
$\Delta(B)$. Therefore $\Delta(B)\cup\{x,z\}\notin\mathcal{F}$ also
implies $\Delta(B)\cup\{x,z\}\notin\mathcal{F}^{\vee}$ by the local
union property. Consequently, repeatedly augmenting $\Delta(B)+z$ from 
$\Delta(B)\cup H_x+x$ yields $\Delta(B)\cup H_x+z\in\mathcal{F}$ as
claimed since $x$ can not augment. 

Then since $P_e^{B+z},\Delta(B)\cup H_x+z,B+z\in\mathcal{F}$ and
$P_e^{B+z},\Delta(B)\cup H_x+z\subseteq B+z$, the local intersection
property \ref{lip} implies $D_z\cup (H_x\cap H_z)+z\in\mathcal{F}$.
Since $P_e^{B+z}=D_z\cup H_z+z$, $H_z\subseteq H_x$ must hold by the
definition of a path. By symmetry we also have $H_x\subseteq H_z$,
which completes the proof.\qed
\end{proof}

Now we are ready for proving Theorem~\ref{thm:dijkstra}. The proof
follows the argument of \cite[page 156]{KLS} where they showed that
property~\ref{prop:klrossz} is fulfilled by a similarly defined
objective function $w$ in local poset greedoids.  As mentioned in
Section~\ref{sect:prelimgreedy}, that was insufficient for
guaranteeing the optimality of the greedy algorithm, however, a
similar argument will work well with Theorem~\ref{thm:intgreedyopt}.

\begin{theopargself}
\begin{proof}[of Theorem~\ref{thm:dijkstra}.]
We will show that \ref{prop:intgreedyopt} is fulfilled by $(-w)$. So
let $A$, $B$, $x$ and $z$ given such that $A,A+x\in\mathcal{F}$,
$x,z\in S-B$, $B+x,B+z\in\mathcal{B}$, 
$\Delta(B)\cup\{x,z\}\notin\mathcal{F}$ and $w(A+x)\le w(A+u)$ for
every $u\in\Gamma(A)$ hold. We need to show $w(B+x)\le w(B+z)$.

Since $\Delta(B)\in\mathcal{F}$, we have 
\setcounter{equation}{5}
\begin{equation}\label{eq:dijkstra}
w(B+i)=w(\Delta(B))+f(P_i^{B+i})+\sum_{e\in B-\Delta(B)}f(P_e^{B+i})
\end{equation}
for $i\in\{x,z\}$.  Let $(b_1,\ldots,b_k=z)$ be the unique feasible
ordering of $P_z^{B+z}$ according to Theorem~\ref{thm:lfgpath} and let
$j\in\{1,\ldots,k\}$ be the smallest index such that $b_j\notin A$ and
denote $u=b_j$. Then since
$\{b_1,\ldots,b_j\},A,P_z^{B+z}\in\mathcal{F}$ and
$\{b_1,\ldots,b_j\},A\subseteq P_z^{B+z}$, we have $A+u\in\mathcal{F}$
by the local union property. Therefore $w(A+x)\le w(A+u)$, which
implies $f(P_x^{A+x})\le f(P_u^{A+u})$ by $w(A+i)=w(A)+f(P_i^{A+i})$
for $i\in\{x,u\}$. Furthermore, $P_u^{B+z}=\{b_1,\ldots,b_j\}$ by
Theorem~\ref{thm:lfgpath}, which implies $f(P_u^{B+z})\le
f(P_z^{B+z})$ by property (i). Noting that $P_x^{A+x}=P_x^{B+x}$ and
$P_u^{A+u}=P_u^{B+z}$ are obvious by $A+x,A+u\in\mathcal{F}$, these
together imply $f(P_x^{B+x})\le f(P_z^{B+z})$.

First assume $B\in\mathcal{F}$. Then $B=\Delta(B)$ and hence
$w(B+i)=w(\Delta(B))+f(P_i^{B+i})$ follows from (\ref{eq:dijkstra})
for $i\in\{x,z\}$. Therefore $w(B+x)\le w(B+z)$ follows immediately
from $f(P_x^{B+x})\le f(P_z^{B+z})$.

Now assume $B\notin\mathcal{F}$. Then by Lemma~\ref{lem:dijkstra} we
have $P_e^{B+x}\cap(B-\Delta(B))=P_e^{B+z}\cap(B-\Delta(B))$ for every
$e\in B-\Delta(B)$, denote this common set by $H_e$. Fix an $e\in
B-\Delta(B)$ and an $i\in\{x,z\}$ and let the unique ordering of
$P_e^{B+i}$ be $(a_1,a_2,\ldots,a_k)$ according to
Theorem~\ref{thm:lfgpath}.  Then $i\in P_e^{B+i}$ is again obvious by
the definition of $\Delta(B)$, so let $i=a_j$ for some $1\le j\le
k$. Then $\{a_1,\ldots,a_j\}=P_i^{B+i}$ by Theorem~\ref{thm:lfgpath}.
Since $\Delta(B)+i\in\mathcal{F}$ is again true as in the proof of
Lemma~\ref{lem:dijkstra}, $P_i^{B+i}\subseteq \Delta(B)+i$ by the
definition of a path. Furthermore, if $y=a_t$ for some $j<t\le k$ then
$i\in P_y^{B+i}$ by Theorem~\ref{thm:lfgpath} and hence
$y\in\Delta(B)$ is impossible because that would imply $i\in
P_y^{B+i}\subseteq\Delta(B)$ by $\Delta(B)\in\mathcal{F}$. All these
together imply $P_e^{B+i}=P_i^{B+i}\cup H_e$.
Since $f(P_x^{B+x})\le f(P_z^{B+z})$ was shown above, this implies
$f(P_e^{B+x})\le f(P_e^{B+z})$ by property (ii) for every $e\in
B-\Delta(B)$. This completes the proof by (\ref{eq:dijkstra}).\qed
\end{proof}
\end{theopargself}

Since $f(P)=c(P)$ obviously fulfills the monotonicity constraints (i)
and (ii) for all non-negative valued weight functions
$c:S\rightarrow\mathbb{R}^+$, Theorem~\ref{thm:dijkstra} is indeed a
generalization Theorem~\ref{thm:boyd}.  Another application of
Theorem~\ref{thm:dijkstra} is to set $f(P)=\max\{c(x):x\in P\}$ for a
weight function $c:S\rightarrow\mathbb{R}$, which again obviously
fulfills conditions (i) and (ii). Theorem~\ref{thm:dijkstra} implies
the fact, which was also proved in \cite{Boyd}, that in local forest
greedoids the greedy algorithm finds a minimum base with respect to
$w$ in this case. If applied to the branching greedoid (and for
maximizing $(-w)$), this implies the well-known fact that the
corresponding modification of Dijkstra's algorithm solves the widest
path problem (also known as the bottleneck shortest path problem) in
graphs.


\end{document}